\documentclass[12pt]{amsart}
\usepackage{amsthm,amsmath,amssymb,amscd,graphics,enumerate, stmaryrd,xspace,verbatim, epic, eepic,color,url}

\usepackage{eurosym}
\usepackage[normalem]{ulem}
\usepackage[active]{srcltx}
\usepackage[all]{xypic}
\SelectTips{cm}{}

%



\setlength{\parskip}{.5mm}
{
   \newtheorem{theorem}[subsubsection]{Theorem}
      \newtheorem*{theorem*}{Theorem}
   \newtheorem{proposition}[subsubsection]{Proposition} 
        
   \newtheorem{lemma}[subsubsection]{Lemma}

   \newtheorem{corollary}[subsubsection]{Corollary}
   
   \newtheorem*{conjecture*}{Conjecture}
   
}
{\theoremstyle{definition}
          \newtheorem*{exercise*}{Exercise}

   \newtheorem*{example*}{Example}
   \newtheorem{definition}[subsubsection]{Definition}
   \newtheorem{defn}[subsubsection]{Definition}
   \newtheorem*{definition*}{Definition}

   \newtheorem{conv}[subsubsection]{Convention}
   \newtheorem*{conv*}{Convention}
   \newtheorem{notation}[subsubsection]{Notation}
}
%

\newcommand{\PP}{{\mathbb{P}}}

\newcommand{\GG}{{\mathbb{G}}}

\renewcommand{\AA}{{\mathbb{A}}}

\newcommand{\tDelta}{{\widetilde{\Delta}}}

\newcommand{\fT}{{\mathfrak{T}}}


%
\newcommand{\cA}{{\mathcal A}}

\renewcommand{\cD}{{\mathcal D}}

\newcommand{\cO}{{\mathcal O}}

\newcommand{\cT}{{\mathcal T}}

\newcommand{\cW}{{\mathcal W}}

\def\<{\langle}
\def\>{\rangle}

\newcommand{\Spec}{\operatorname{Spec}}

\newcommand{\sh}{{\operatorname{sh}}}

\newcommand{\ocM}{\overline{{\mathcal M}}}

\newcommand{\smooth}{{\operatorname{sm}}}
\newcommand{\sing}{{\operatorname{sing}}}
\newcommand{\spe}{{\operatorname{sp}}}

\newcommand{\pd}{{\operatorname{pd}}}
\newcommand{\nd}{{\operatorname{nd}}}

\newcommand{\double}{\genfrac..{0pt}1
{\raise -1pt\hbox{$\scriptstyle\longrightarrow$}}{\raise 3pt\hbox
{$\scriptstyle\longrightarrow$}}} 

\renewcommand{\setminus}{\smallsetminus}






\def\tototi{\mathbin{\mathop{\otimes}\limits^{\raise-1pt\hbox
{$\scriptscriptstyle {\rm L}$}}}}

\def\indlim{\mathop{\vrule width0pt height7pt depth
4pt\smash{\lim\limits_{\raise 1pt\hbox to 14.5pt
{\rightarrowfill}}}}}
\def\projlim{\mathop{\vrule width0pt height7pt depth
4pt\smash{\lim\limits_{\raise 1pt\hbox to 14.5pt
{\leftarrowfill}}}}}

\newcommand\displaceamount{3pt}

\newcommand{\doubledown}{\ar@<\displaceamount>[d]\ar@<-\displaceamount>[d]}

\newcommand{\doubleup}{\ar@<\displaceamount>[u]\ar@<-\displaceamount>[u]}

\newcommand{\doubleright}{\ar@<\displaceamount>[r]\ar@<-\displaceamount>[r]}

\newcommand{\eps}{\varepsilon}


\newcommand{\fK}{\mathfrak K}

\begin{document}

\title[Configurations and compactness]{Configurations of points on degenerate varieties and properness of moduli spaces}

\author[D. Abramovich]{Dan Abramovich}
\thanks{Research of D.A. partially supported by NSF grant DMS-0335501 and DMS-0603284}  
\address{Department of Mathematics, Box 1917, Brown University,
Providence, RI, 02912, U.S.A} 
\email{abrmovic@math.brown.edu}
\author[B. Fantechi ({\tiny\today})]{Barbara Fantechi}
\thanks{Research activities of B.F. partially supported by Italian research grant PRIN 2006 ``Geometria delle variet\`a proiettive", ESF Research Network MISGAM, Marie Curie RTN ENIGMA, and GNSAGA}  
\address{SISSA,
Via Bonomea 265,
34136 Trieste, Italy} 
\email{fantechi@sissa.it}

\date{\today}


\maketitle
\setcounter{tocdepth}{1}

\tableofcontents

 \section*{Introduction} 
 
Consider a smooth variety $X$ and a smooth divisor $D\subset X$.  Kim and Sato in \cite[1.1, esp. Theorem 1]{Kim-Sato} define 
a natural compactification of $(X\setminus D)^n$, denoted $X_D^{[n]}$, which is a
moduli space of stable configurations of $n$ points lying on expansions of $(X,D)$ in the sense of \cite{Li1}. 

The purpose of this note is to generalize \cite[Theorem 1]{Kim-Sato}  to the case where $X$ is an algebraic stack; and to construct  an analogous projective moduli space $W_\pi^{[n]}$ for a degeneration $\pi:W \to B$. We construct $X^n_D$ and $W_\pi^{[n]}$  and prove their properness using a universal construction introduced in \cite{ACFW}.
We then use these spaces for a concrete application, as explained in the next paragraph.

In \cite{AF}, a degeneration formula for Gromov--Witten invariants of schemes and stacks is developed, generalizing the approach of Jun Li \cite{Li1,Li2}. This in particular requires proving properness of Li's stack of pre-deformable stable maps  in the case where the target $(X,D)$ or $W\to B$ is a  Deligne--Mumford stack. One could simply adapt Li's proof, or follow the age-old tradition of imposing such endeavor as an exercise on ``the interested reader". 

Instead, we prefer to provide a different proof here, which  uses the properness of $X_D^{[n]}$ and $W_\pi^{[n]}$. Similar ideas are used in \cite{KKO} to prove the properness of their space of ramified maps.

\begin{conv*}  To keep ideas simple we work over an algebraically closed base field $K$; a scheme will be a scheme of finite type over $K$; a curve will be a purely one-dimensional scheme; we will use the word algebraic stack in the sense of Artin, and assume locally finite type over $K$; we will write DM for Deligne-Mumford stacks. A point in an algebraic stack, and in particular in a scheme, will be a $K$-valued point.
\end{conv*}

\paragraph{\bf Achnowledgements.} Thanks to Bumsig Kim, Andrew Kresch, and Jonathan Wise, whose shared ideas and comments are manifest in this note.

\section{Stable expanded configurations}

Suppose $X$ is a smooth algebraic stack. Then $X^n$ is a parameter space for $n$ ordered points in $X$. If $D\subset X$ is a smooth divisor, we may wish to consider a space of $n$ ordered points on $X$ where points are not allowed to land in $D$, but rather $X$ is replaced by an expansion. We construct in Section \ref{confpairs} such a compactification, denoted $X^{[n]}_D$: it is an immediate generalization of the first construction in \cite{Kim-Sato} to algebraic stacks, though our method is different. 

Similarly, suppose {$\pi:W\to B$ is a flat morphism with $W$ a smooth algebraic stack, $B$ a smooth curve and all fibers $W_b$ smooth except for $W_0:=W_{b_0}$ which is the union of two smooth stacks $X_1$ and $X_2$ intersecting transversely along a divisor $D$. }
Then the self fibered product $W^n_B$ can be viewed as a parameter space of $n$ ordered points on fibers of $\pi$. 
{We} construct in Section \ref{confdeg} a space of $n$-tuples of points on the fibers which are not allowed to land in the singular locus of $\pi$ but rather $W$ is replaced by an expansion.

\subsection{Notation for degenerations}\label{notdeg}

\begin{conv}\label{degeneration_conv} 
In this section we fix $\pi: W \to B$, a flat morphism such that $B$ is a smooth curve, $W$ is a smooth algebraic stack, and $b_0\in B$ is the unique critical value of $\pi$; we set $W_0:=\pi^{-1}(b_0)$ and assume $W_0= X_1\sqcup_D X_2$  is the union of two smooth closed substacks $X_1$ and $X_2$ intersecting transversally along $D$, a smooth divisor in each $X_i$. This implies that $W_0$ is  first-order smoothable along its singular locus $D$, i.e., $N_{D/X_1}$ is dual to $N_{D/X_2}$. 
\end{conv}

We use the notation of \cite[Section 2.3]{ACFW}. The {\em expansion of length $\ell\ge0$ of $W_0$} is \begin{equation*}
  W(\ell) := X_1 \hskip -1ex \mathop{\sqcup}\limits_{\hphantom{_-}D=D_1^-}  P_1 \mathop{\sqcup}\limits_{D_1^+=D_2^-}  \cdots   \mathop{\sqcup}\limits_{D_{\ell-1}^+=D_\ell^-}  P_\ell\mathop{\sqcup}\limits_{D_\ell^+=D}\hphantom{_-} 
  X_2
\end{equation*}
where the {\em exceptional components} $P_j$ are all isomorphic to the $\PP^1$-bundle $\PP(N_{D/X_1}\oplus \cO_D)=\PP(\cO_D\oplus N_{D/X_2})$ and $D_j^{-}$, $D_j^{+}$ are 
{the} zero and infinity sections {in $P_j$} (see \cite[Definition 2.3.1]{ACFW} for details). {The automorphism group of $W(\ell)$ is defined to be $\GG_m^\ell$ where the $j$-th factor acts on the fibers of $P_j$ fixing $D_j^-$ and $D_j^+$.}

Let $\cA$ be the stack quotient $ [\AA^1/\GG_m]$, so that morphisms $S\to \cA$ are{ pairs $(L,s)$ consisting of a line bundle and section} on $S$, see e.g. \cite[Lemma 2.1.1]{cadman}. We denote by $\fT$ the universal stack of expansions of the degeneration $\varpi:\cA^2 \to \cA$ induced by $t=xy$, with universal expansion $(\cA^2)'\to \cA^2$. 

The {\em moduli stack of expansions} of $W\to B$ is $\fT_B = \fT\times_\cA B$ where $B \to \cA$ is the morphism associated to the Cartier divisor $\{b_0\} \subset B$; the universal expansion is $W' = (\cA^2)' \times_{\cA^2} W\to W$, where $W\to\cA^2$ is the smooth morphism induced by the divisors $X_1$ and $X_2$: see \cite[Definition 2.3.6]{ACFW} for details.

\subsection{Configurations on degenerations}\label{confdeg}


\begin{definition}
A {\em stable expanded configuration} $(\cW, \sigma_i)$ of degree $n$ on $W\to  B$ consists of
\begin{enumerate}
\item
a point $\cW$ of $\fT_B$, hence {either a smooth fiber $W_b$ or an expansion of $W_0$}, and
\item an ordered  collection of  $n$ smooth points $\sigma_i\in\cW^\smooth$,
\end{enumerate}
such that, in case $\cW$ is an expansion, the following stability condition holds:
\begin{itemize}
\item each exceptional component $P_j$ contains at least one $\sigma_{i}$.
\end{itemize}

An {\em isomorphism} $\rho: (\cW,\sigma_i) \to (\cW',\sigma_i')$ is an isomorphism of expanded degenerations $\rho:\cW \to \cW'$ such that $\rho\circ \sigma_i = \sigma_i'$.{ Note that the stability condition is equivalent to requiring that the only automorphism of $\cW$ fixing all the $\sigma_i$'s is the identity automorphism.}

\end{definition}

\begin{defn}
A {\em family of stable expanded configurations} of degree $n$ over a $B$-scheme $S$ is given by
\begin{enumerate}
\item a family $\cW_S\to W\times_B S$ of expanded degenerations over $S$, i.e. an object of $\fT_B(S)$;
\item $n$ sections $\sigma_i:S\to \cW_S$;
\end{enumerate}
such that for every $s\in S$ the fiber $(\cW_s,\sigma_{i,s})$ over $s$ is a stable expanded configuration of degree $n$.
\end{defn}

Morphisms of families of stable expanded configurations are defined in the obvious way; the resulting fibered category $W_{\pi}^{[n]}$ is clearly a stack, and indeed a sheaf if $W$ is a scheme or algebraic space. Composing $\sigma_i$ with $\cW \to W$, we obtain an object of the fibered power $W^n_\pi$ of $W$ over $B$, giving a morphism $W_{\pi}^{[n]} \to W^n_\pi$.

The notation should not be confused with the notation for the degree-$n$ Hilbert scheme of a surface.

The following lemma is an immediate consequence of the definition:
\begin{lemma}\label{Lem:config-open} Assume $V\subset W$ is open and $\varpi$ is the restriction of $\pi$. Then $V^{[n]}_\varpi = V^n_\varpi \times_{W^n_\pi} W^{[n]}_\pi$. 
\end{lemma}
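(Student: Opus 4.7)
The plan is to verify the equality of stacks by exhibiting the natural isomorphism on $S$-valued points, working directly from the definitions given in the excerpt. First I would unpack both sides. A point of $V^{[n]}_\varpi(S)$ consists of an object of $\fT_B(S)$ (note that $\fT_B$ depends only on $B\to\cA$, which is the same for $V$ and $W$), producing an expansion $\cV_S$ of $V$ over $S$, together with $n$ smooth sections $\sigma_i:S\to\cV_S$ satisfying the $V$-stability condition. A point of $V^n_\varpi\times_{W^n_\pi}W^{[n]}_\pi$ consists of $n$ sections $v_i:S\to V$ over $B$, together with a stable expanded $W$-configuration $(\cW_S,\tau_i)$ whose compositions $S\to\cW_S\to W$ agree with $v_i$ followed by $V\hookrightarrow W$.

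Next I would construct a forward map $V^{[n]}_\varpi\to V^n_\varpi\times_{W^n_\pi}W^{[n]}_\pi$. The underlying $\fT_B$-point of a $V$-configuration gives an expansion $\cW_S$ of $W$, and the universal property of $W'=(\cA^2)'\times_{\cA^2}W$ shows that $\cV_S=\cW_S\times_W V$ is an open substack; so the $\sigma_i$ can be composed with this open immersion to yield sections of $\cW_S$. Composition with $\cV_S\to V$ yields the required element of $V^n_\varpi(S)$. The only non-formal check is $W$-stability: each exceptional component $P_j$ of $\cW_S$ pulls back to the exceptional component $V\cap P_j$ of $\cV_S$, and $V$-stability supplies a $\sigma_i\in V\cap P_j\subseteq P_j$. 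Smoothness of the sections inside $\cW_S$ follows from smoothness inside $\cV_S$ since the smooth locus is open.

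For the inverse, given sections $v_i:S\to V$ and a stable $W$-configuration $(\cW_S,\tau_i)$ with matching images in $W^n_\pi$, I would set $\cV_S:=\cW_S\times_W V$, which is the expansion of $V$ associated to the same $\fT_B$-point. Since each $\tau_i:S\to\cW_S$ lands in $V$ after projecting to $W$, it factors uniquely through $\cV_S$, producing $\sigma_i:S\to\cV_S$. Openness preserves smoothness. For $V$-stability: every exceptional component of $\cV_S$ is $V\cap P_j$, $W$-stability gives a $\tau_i\in P_j$, and because $\tau_i$ lands in $V$ this point lies in $V\cap P_j$. The two constructions are evidently mutually inverse.

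I do not expect a real obstacle: the whole content is the single observation that, because $V'=(\cA^2)'\times_{\cA^2}V$ is the base change of $W'$ along $V\hookrightarrow W$, expansions commute with open restriction, and everything else—the factoring of sections through $\cV_S$, preservation of smoothness, and most importantly the compatibility of the two stability conditions—reduces to this. Articulating the equivalence of the two stability conditions is the only step that deserves a sentence of its own in the written proof.
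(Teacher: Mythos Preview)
Your argument is correct and is precisely the unpacking that the paper has in mind: the paper gives no proof at all, merely stating that the lemma ``is an immediate consequence of the definition.'' Your construction of mutually inverse maps on $S$-points, together with the observation that the exceptional components of $\cV_S$ are exactly the pullbacks of those of $\cW_S$ (so that the two stability conditions match because every $\tau_i$ lands in $V$), is exactly the content of that phrase made explicit.
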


We construct $W_{\pi}^{[n]}$ starting with a special case, described in the following lemma:

\begin{lemma}\label{Lem:basic-deg} Assume $B= \AA^1$ and $W=W_{\PP}\subset \PP^2 \times \AA^1$ is the pencil $tZ^2 = XY$. Then for every $n\ge 1$ the  stack $W_{\pi}^{[n]}$ is naturally isomorphic to the moduli space of stable weighted $n$-pointed genus 0 maps $\ocM_{0, (\epsilon,\ldots,\epsilon)}(W,\beta_F)$ in the sense of Hassett, where $\beta_F$ is the class of the fiber and $\epsilon$ is a real number in $(0,1/n)$.

In particular in this case $W_{\pi}^{[n]}$ is a smooth variety, projective over $B$ and hence over $W^n_\pi$.
\end{lemma}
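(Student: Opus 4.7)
The plan is to exhibit mutually inverse natural transformations between $W^{[n]}_\pi$ and Hassett's moduli stack $\ocM_{0,(\epsilon,\ldots,\epsilon)}(W,\beta_F)$, and then invoke the known properties of the Hassett space for the ``in particular'' assertion. Since $D$ here is a single point, each exceptional component $P_j=\PP(N_{D/X_1}\oplus\cO_D)$ is a copy of $\PP^1$, so an expansion $W(\ell)$ is simply a chain of $\ell+2$ projective lines equipped with a canonical morphism to $W_0$ which is an isomorphism on $X_1, X_2$ and contracts each $P_j$ to $D$. The forward transformation $\Phi$ sends a family $(\cW_S,\sigma_{i,S})$ of stable expanded configurations to the family of $n$-pointed arithmetic-genus-zero nodal curves $\cW_S$ endowed with the composition $\cW_S\to W\times_B S\to W$. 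Each geometric fiber represents $\beta_F$ (since $[X_1]+[X_2]=\beta_F$ on $W_0$ and smooth fibers already are in class $\beta_F$), and the $\epsilon$-weighted stability unwinds to the ``at least one marking per $P_j$'' condition from the expanded-configuration definition: on a contracted $P_j$ with $2$ nodes and $m\ge 1$ markings, the dualizing sheaf twisted by the weighted marked points has degree $-2+2+m\epsilon>0$, while non-contracted components contribute positively through $f^*L$ for any $L$ ample on $W$.

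For the inverse $\Psi$, let $(C_S,p_{i,S},f_S)$ be a family of Hassett-stable maps of class $\beta_F$. On each geometric fiber, $B$ being affine and $C_s$ connected of arithmetic genus zero, the composition $\pi\circ f_s$ is constant, equal to some $b(s)\in B$, and $f_s$ factors through $W_{b(s)}$. If $b(s)\neq 0$, then $f_s:C_s\to W_{b(s)}\cong\PP^1$ has total degree one; the stability inequality $-2+k+m\epsilon>0$ on contracted rational components, combined with $m\epsilon\le n\epsilon<1$, rules out all contracted components (any leaf of such a tree would have $k=1$ and need $m\epsilon>1$), so $C_s\cong W_{b(s)}$. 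If $b(s)=0$, then exactly two components $C_1,C_2$ map isomorphically onto $X_1,X_2$ respectively, and the same numerical inequality forces each remaining contracted component to have exactly $2$ nodes (hence to lie on the unique $C_1$-to-$C_2$ path) and to carry at least one marking, while forbidding branches off that trunk. Thus $C_s$ is a chain $C_1{-}T_1{-}\cdots{-}T_\ell{-}C_2$ with each $T_j$ marked, which is the underlying scheme of an expansion $W(\ell)$, and $f_s$ is the canonical contraction.

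The main technical step is promoting this pointwise identification to a morphism $S\to\fT_B$. Given the family $C_S$ with its compatible morphism to $W\times_B S$, the universal property of $\fT_B$ recalled in \S\ref{notdeg} --- formulated through the smooth morphism $W\to\cA^2$ defined by the divisors $X_1,X_2$ --- yields the desired $S$-point, canonical up to the $\gm^\ell$-automorphisms already absorbed into the stack structure; the sections $p_{i,S}$ then become the sections $\sigma_{i,S}$ of the resulting family of expanded configurations. The transformations $\Phi$ and $\Psi$ are visibly inverse, giving the claimed natural isomorphism of stacks. The ``in particular'' assertion then follows from standard properties of Hassett's moduli spaces of weighted pointed stable maps: $W$ is smooth and projective over $B$, the class $\beta_F$ represents an unobstructed family of rational curves, so $\ocM_{0,(\epsilon,\ldots,\epsilon)}(W,\beta_F)$ is a smooth variety projective over $B$. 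The principal obstacle in the whole argument is the families version of $\Psi$ --- recovering the full expansion datum (including the identifications $D_j^\pm$) functorially from the stable-map data.
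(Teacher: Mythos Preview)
Your argument is correct and follows essentially the same route as the paper's proof: construct the forward map by reading an expanded configuration as an $\epsilon$-weighted stable map, verify stability on contracted components via $-2+2+m\epsilon>0$, and for the inverse use the inequality $1+n\epsilon<2$ to exclude contracted tails, forcing $C$ to be a chain. The only difference is emphasis: you explicitly flag and address the families-level step (recovering an object of $\fT_B$ from the stable-map datum), whereas the paper dispatches this in a single sentence by observing that flatness of $C_S\to S$ and properness of $C_S\to W\times_B S$ are already part of the stable-map moduli problem, so the family is automatically an object of $\fT_B(S)$.
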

\begin{proof}Fix an integer $n>0$, an $\epsilon$ as in the statement, and write $M:=\ocM_{0, (\epsilon,\ldots,\epsilon)}(W,\beta_F)$ for brevity. 

Let $S$ be a scheme, and $(\cW,\sigma_i)$ a family of stable expanded configurations over $S$. Then every fiber $\cW_s$ of $\cW$ is a nodal curve of arithmetic genus zero; moreover, if a component $\bar C$ of $\cW_s$ is contracted by the natural morphism $f:\cW\to W$, then $\bar C$ is rational and contains at least two nodes and a marked point, hence is stable since $1+1+\epsilon>2$.
This defines a natural map $W_\pi^{[n]}\to M$ as sheaves.

To define the inverse, let $S$ be a scheme and $(C_S,\sigma_{i,S},f_S)\in M(S)$; we want to show that it is a stable expanded configuration. Let $(C,x_i,f)$ be a point in $M$; since the global genus is zero, every irreducible component of $C$ is smooth and rational. Moreover, no contracted component can be a tail, i.e., intersect the rest of the curve in only one point, since if such a component contains $r$ marked points we have $1+r\eps\le 1+n\eps<2$, contradicting stability. 

Let $t\in B$ such that $f(C)=W_t$. If $t\ne 0$, $W_t$ is integral, hence there is a unique component $C_0$ of $C$ mapping isomorphically to $W_t$. If $C$ had other components, than at least one of them must be a tail since every tree with at least two vertices has at least two end vertices; since all other components are contracted, this yields a contradiction. Therefore $C\to W_t$ must be an isomorphism.

If $t=0$, then $W_t$ is the union of two irreducible components $X_1$ and $X_2$ meeting in one point $p$, and $C$ has two non-contracted components $C_1$ and $C_2$ such that $f$ maps $C_i$ to $X_i$ isomorphically. Any other irreducible component must be contracted, hence $C_1$ and $C_2$ are the only tails of $C$, which implies that $C$ is a chain of rational curves, with $C_1$ and $C_2$ as extremes. Moreover, the stability condition implies that each irreducible component must contain at least one marked point $x_i$. This concludes the proof, since other conditions (flatness of $C_S\to S$, properness of $W_S\to W\times_TS$) are part of the definition of stable maps.

{It follows that} $W_{\pi}^{[n]}$ is a smooth variety, projective over $B$ and hence over $W_{\pi}^n$ (see \cite[Theorem 1.9]{AG}, \cite[Theorem 1.1.4]{BayerManin}, \cite[Theorem 1.7]{MustataMustata}, see also \cite[Theorem 1]{Hassett}).
\end{proof}
We now return to the general case.

\begin{proposition}\label{Prop:conf-deg}
The stack $W_{\pi}^{[n]}$ is smooth and algebraic, and the morphism $W_{\pi}^{[n]} \to W^n_\pi$ is projective. In particular, if $W$ is a scheme, or a Deligne--Mumford stack, so is $W_{\pi}^{[n]}$.
\end{proposition}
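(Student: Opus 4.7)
The plan is to combine the structure of the universal expansion stack $\fT_B$ (algebraic and smooth over $B$ by \cite{ACFW}) with a reduction to the basic pencil case of Lemma \ref{Lem:basic-deg}. For algebraicity and smoothness I would argue formally: the universal expansion $W'\to\fT_B$ is flat and proper, its smooth locus $(W')^{\smooth}\to\fT_B$ is smooth, and hence the $n$-fold fibered power $((W')^{\smooth}/\fT_B)^n$ is a smooth algebraic stack. The stack $W_\pi^{[n]}$ is the open substack cut out by the stability condition that each exceptional component of the fiber carry at least one section; this is open because any specialization that would violate stability would force some section onto a singular point, already excluded.

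For projectivity of $W_\pi^{[n]}\to W_\pi^n$, I would reduce to Lemma \ref{Lem:basic-deg}. Both $W\to\cA^2$ (classifying the divisors $X_1,X_2$) and $W_\PP\to\cA^2$ are smooth, and both configuration spaces are base changes of a single universal stack over $\fT$, giving Cartesian squares
\[
\begin{CD}
W_\pi^{[n]} @>>> (\cA^2)_\varpi^{[n]} \\
@VVV @VVV \\
W_\pi^n @>>> (\cA^2)_\varpi^n
\end{CD}
\qquad\text{and the analogous one with $W$ replaced by $W_\PP$.}
\]
Lemma \ref{Lem:basic-deg} then gives projectivity of the configuration space in the basic case over $W_\PP^n$. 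Since $W_\PP\to\cA^2$ is smooth surjective, the canonical Hassett relative ample bundle---built from the relative dualizing sheaf and the markings, and therefore $\GG_m^\ell$-equivariant with respect to the expansion reparameterization---descends to an ample bundle on $(\cA^2)_\varpi^{[n]}\to(\cA^2)_\varpi^n$; base change along $W_\pi^n\to (\cA^2)_\varpi^n$ then yields projectivity of $W_\pi^{[n]}\to W_\pi^n$.

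The main obstacle is this descent step, since relative projectivity is not automatically smooth-local. The canonical and equivariant nature of the Hassett polarization should make the descent routine, but one could alternatively bypass it by describing $W_\pi^{[n]}$ directly as an iterated blow-up of $W_\pi^n$ along the smooth strata indexed by subsets of sections meeting $D$, which would make projectivity manifest from the construction.
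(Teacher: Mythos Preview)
Your overall strategy matches the paper's: reduce to the universal case $(\cA^2)_\varpi^{[n]}\to(\cA^2)_\varpi^n$ via the Cartesian square, and deduce projectivity there from the basic pencil of Lemma~\ref{Lem:basic-deg}. Two points deserve correction.

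First, your justification for openness of the stable locus is not right. A specialization in $\fT_B$ can insert a new exceptional component while all sections remain smooth and simply miss it; nothing forces a section onto the singular locus. Openness holds for a different reason: the unstable locus is the union, over exceptional components $P_j$, of the closed condition ``$P_j$ exists and all sections avoid $P_j^o$'', hence is closed. Equivalently, stability is exactly the condition that the $\GG_m^\ell$-stabilizer be trivial, which is open.

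Second, and more substantively, the descent step is where your argument is incomplete, and the group you invoke is the wrong one. The $\GG_m^\ell$ of expansion reparametrizations acts trivially on $W_\pi^{[n]}$ (that is precisely what stability buys), so its equivariance says nothing about descent along $W_\PP\to\cA^2$. The paper handles this by inserting the intermediate case $W_\AA=\AA^2$: Lemma~\ref{Lem:config-open} gives projectivity of $(\AA^2)_\pi^{[n]}\to(\AA^2)_\pi^n$ by restriction from $W_\PP$; then $\AA^2\to\cA^2$ is a genuine $\GG_m^2$-torsor, inducing a $\GG_m^{n+1}$-action on $(\AA^2)_\pi^{[n]}\to(\AA^2)_\pi^n$, and the relatively ample line bundle (living on a toric variety) carries an equivariant structure by \cite[Section~3.4]{fulton1993introduction}, so it descends to the quotient $(\cA^2)_\varpi^{[n]}\to(\cA^2)_\varpi^n$. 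Once you route through $\AA^2$ and use the correct torus, your sketch becomes the paper's proof.
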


\begin{proof}
{\sc Case 1:} $B= \AA^1$ and $W= W_\AA =\AA^2$ mapping via $t=xy$.  This case is the restriction of the case treated in  Lemma \ref{Lem:basic-deg} to an open set  $W_\AA \subset W_{\PP}$, so it follows by Lemma \ref{Lem:config-open}.

Note that in this case there is an action of $\GG_m$ on $B$ and a compatible action of $\GG_m^2$ on $W$; if we write $(s_1,s_2)$ for a point on $\GG_m^2$, it acts on $W$ via $(x,y) \mapsto (s_1x,s_2y)$, and $s=s_1s_2\in \GG_m$ acts on $B$ via $t\mapsto st$. This induces an action of the $n$-th fiber product $\GG_m^{n+1}$ of  $\GG_m^2$ over $\GG_m$
 on $W_{\pi}^{[n]} \to W^n_\pi$ equivariant with respect to  the $\GG_m$ action on $B$.

{\sc Case 2:}  $B= [\AA^1/\GG_m]=\cA$ and $W= W_\cA= [\AA^2/\GG_m^2]=\cA^2$.  This is the quotient of the previous case by the action of $\GG_m^{n+1}$. 
Projectivity is preserved, since the relatively ample line bundle admits an equivariant structure; indeed, every line bundle on a toric variety admits such structure, see Section 3.4 of \cite{fulton1993introduction}.
For reference below we denote the  morphism described in this case $\varpi:W_\cA \to \cA$.

{\sc General case:} We use the notation in Section  \ref{notdeg}.



Consider the smooth morphisms $B\to\cA$ given by the divisor $b_0$ and $W\to W_\cA=\cA^2$ given by the divisors $X_1$ and $X_2$:
Composing any family of stable expanded configurations of degree $n$ on $W$ with these morphisms   gives a family of stable configurations in the fibers of $W_\cA$, hence we get a natural morphism   $$W_{\pi}^{[n]} \to W_{\pi}^n \times_{(W_\cA)_{\varpi}^n} (W_\cA)_{\varpi}^{[n]}.$$

We construct an inverse of this morphism as follows. Let $S$ be a $B$--scheme, and $S\to (W_\cA)_{\varpi}^{[n]}$ a morphism over $\cA$ given by a family of stable expanded configurations  $\tilde \sigma_i:S\to (\cW_\cA)'$; then to every $B$--morphism $g:=(g_1,\ldots,g_n):S\to W_\pi^n$ we can associate a family of stable expanded configurations  on $W$ by letting $\sigma_i:S\to \cW$ be the morphisms induced by $\tilde\sigma_i$ and $g_i:S\to W$.

It is easy to see that these two constructions are inverse of each other.\end{proof}

\subsection{Other degenerate cases}\label{Sec:other} The construction given in the general case of Proposition \ref{Prop:conf-deg} apply in all cases introduced in \cite[Section 2.2]{ACFW}.  
In particular one can consider the situation of having a degenerate fiber with no chosen smoothing as introduced in  \cite[2.2.2]{ACFW}: here 
$B$ is a point {$B=\{b_0\}$} and $W$ is first order smoothable, i.e. the union of two smooth components $X_1$ and $X_2$ meeting transversally along a smooth divisor $D$, with $N_{D/X_1}$ dual to $N_{D/X_2}$.
The result is the following:

\begin{proposition}\label{Prop:conf-deg-fiber}
The stack $W_{\pi}^{[n]}$ is algebraic, and the morphism $W_{\pi}^{[n]} \to W^n_\pi$ is projective. In particular, if $W$ is a scheme, or a Deligne--Mumford stack, so is $W_{\pi}^{[n]}$.
\end{proposition}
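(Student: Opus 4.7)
The plan is to follow the three-step strategy used in the proof of Proposition~\ref{Prop:conf-deg}, now adapted to the situation where the base $B$ is the single point $\{b_0\}$ and no smoothing of $W = X_1 \cup_D X_2$ is chosen. The key point is that the universal construction of \cite[Section~2.2.2]{ACFW} is still obtained by pullback from the same universal toric degeneration $\varpi : W_\cA = \cA^2 \to \cA$ via the smooth morphism $W \to \cA^2$ classifying the ordered pair of divisors $(X_1, X_2)$. That is, the stack of expansions of $W$ agrees with $\fT \times_{\cA^2} W$, with no need to specify any map from $B$ to $\cA$; the only datum lost relative to the setting of Convention~\ref{degeneration_conv} is the smoothing parameter itself.

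Given this, I would verify that the natural morphism
$$
W_\pi^{[n]} \longrightarrow W^n_\pi \times_{(W_\cA)_\varpi^n} (W_\cA)_\varpi^{[n]}
$$
is an isomorphism, by the same argument as in the general case of Proposition~\ref{Prop:conf-deg}: a family of stable expanded configurations on $W$ over $S$ is the same datum as a family of stable expanded configurations on $W_\cA$ together with a compatible lift of the $n$ sections from $W_\cA^n$ to $W^n$. Case~2 of Proposition~\ref{Prop:conf-deg} already shows that $(W_\cA)_\varpi^{[n]} \to (W_\cA)_\varpi^n$ is algebraic and projective; both properties pass to $W_\pi^{[n]} \to W^n_\pi$ through this fiber product. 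Since projective morphisms are representable, the Deligne--Mumford (respectively, scheme) property is inherited by $W_\pi^{[n]}$ from $W^n_\pi$ automatically.

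The main obstacle I anticipate is essentially bookkeeping: one must verify carefully that the no-smoothing stack of expansions of \cite[Section~2.2.2]{ACFW} really does coincide with the pullback $\fT \times_{\cA^2} W$, i.e.\ that forgetting the smoothing corresponds exactly to not fixing a morphism $B \to \cA$ in the universal picture, and that the stability condition is correctly transported across this identification. Once this compatibility is granted, the rest of the proof is formal: algebraicity descends through the smooth cover $W \to W_\cA$, and projectivity is preserved because ample line bundles on the toric stacky universal model admit equivariant structures, exactly as used in Case~2 of Proposition~\ref{Prop:conf-deg}. Unlike in that proposition, we cannot assert smoothness of $W_\pi^{[n]}$, since $W$ itself is singular along $D$.
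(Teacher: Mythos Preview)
Your overall strategy is the same as the paper's --- reduce to the universal case $(W_\cA)_\varpi^{[n]}\to (W_\cA)_\varpi^n$ via a fiber-product isomorphism --- but there is a genuine technical error in the setup that is exactly the point the paper takes care to address.

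You assert that $W\to\cA^2$ is smooth. It is not: here $W=X_1\cup_D X_2$ is singular along $D$, while $\cA^2$ is smooth, so no morphism $W\to\cA^2$ can be smooth. Relatedly, your claim that ``no need to specify any map from $B$ to $\cA$'' is the opposite of what the paper does. The paper takes $B=\{b_0\}\to\cA$ to be the inclusion of the origin, forms the singular fiber $W_B:=W_\cA\times_\cA B$ (the union of two copies of $\cA$ along $B\GG_m$), and then constructs a \emph{smooth} morphism $W\to W_B$ by gluing the smooth classifying maps $X_i\to W_{B,i}\cong\cA$. The gluing along $D$ is exactly where the first-order smoothability hypothesis $N_{D/X_1}\cong N_{D/X_2}^\vee$ enters --- a hypothesis your argument never uses, which should have been a warning sign. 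Once $W\to W_B$ is known to be smooth, the pullback compatibility of expansions from \cite{ACFW} applies and the rest of the argument runs as in the general case of Proposition~\ref{Prop:conf-deg}. (Your formula $\fT\times_{\cA^2}W$ for the stack of expansions is also off: $\fT$ lives over $\cA$, not $\cA^2$; the stack of expansions here is $\fT\times_\cA B$ with $B\to\cA$ the origin.)

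So the fix is not mere bookkeeping: replace the nonexistent smooth map to $\cA^2$ by the smooth map $W\to W_B$, and explain why the normal-bundle condition makes that map exist and be smooth.
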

\begin{proof} We can modify the previous proof as follows. First, consider {$B\to \cA$} induced by the unique morphism $B\to 0\subset \AA^1$; the fiber product {$W_B:=W_\cA\times_\cA B$} is the transversal union of two smooth irreducible components $W_{B,i}$, each isomorphic to {$\cA$}, meeting transversally along $\GG_m$. The pairs $(X_i,D)$ define smooth morphisms $X_i\to W_{B,i}$ and the condition on the normal bundles ensures that they glue to define a smooth morphism $W\to W_B$. The rest of the proof is the same as the general case above.
\end{proof}

Note that in this case $W_{\pi}^{[n]}$ is not smooth. Our construction gives a smooth map to  $\{b_0\} \times_\cA ({W_\cA})_{\varpi}^{[n]}$; its singularities are therefore modeled on the boundary of  $\ocM_{0, (\epsilon,\ldots,\epsilon)}(W,\beta_F)$, which is a normal crossings divisor.

\subsection{Notation for pairs}
\begin{conv}\label{pairs_conv} In this section we fix a pair $(X,D)$, where $X$ is a smooth algebraic stack and $D$ a smooth divisor in $X$.
\end{conv}

The {\em expansion of length $\ell\ge0$} of $(X,D)$ is the pair $(X(\ell),D(\ell))$ with \begin{equation*}
X(\ell) := X_1 \hskip -1ex \mathop{\sqcup}\limits_{\hphantom{_-}D=D_1^-}  P_1 \mathop{\sqcup}\limits_{D_1^+=D_2^-}  \cdots   \mathop{\sqcup}\limits_{D_{\ell-1}^+=D_\ell^-}  P_\ell
\end{equation*}
and $D(\ell):=D_\ell^-\subset P_\ell$. As in \ref{degeneration_conv} the {\em exceptional components} $P_j$ are all isomorphic to $\PP(N_{D/X_1}\oplus \cO_D)$ and $D_j^{-}$, $D_j^{+}$ are their zero and infinity sections (see \cite[Definition 2.3.1]{ACFW}). Its automorphism group is again defined to be $\GG_m^\ell$ acting componentwise on the $P_j$'s.

Following the notation of \cite[Section 2.1]{ACFW} we {let again $\cA:=[\AA^1/\GG_m]$ and} denote by $\cD\subset \cA$ the smooth divisor $[0/\GG_m]$. We denote by $\cT$ the universal stack of expansions of pairs: it parametrizes expansions of $(\cA,\cD)$, with universal expansion denoted by $(\cA',\cD')$. The stack $\cT$ also parametrizes expansions of any pair $(X,D)$, with universal family $(X',D')$ where $X' = \cA'\times_\cA X$ and  $D' = \cD' \times_\cD D$, see \cite[Definition 2.1.6]{ACFW}.

\subsection{Configurations on pairs}\label{confpairs}


\begin{definition}
A {\em stable expanded configuration} $(X',D', \sigma_i)$ of degree $n$ on $(X,D)$ consists of
\begin{enumerate}
\item
a point $(X',D')$ of $\cT$, {that is an expansion of the pair $(X,D)$} and
\item an ordered  collection of  $n$ smooth points $\sigma_i\in(X')^\smooth\setminus D'$,
\end{enumerate}
such that the following stability condition holds:
\begin{itemize}
\item each exceptional component $\PP_j$ contains at least one $\sigma_{i}$.
\end{itemize}

An {\em isomorphism} $\rho: (X',D',\sigma_i) \to (\bar X',\bar D',\bar\sigma_i)$ is an isomorphism of expanded pairs  $\rho:(X',D')\to(\bar X',\bar D') $ such that $\rho\circ \sigma_i = \bar\sigma_i$.
\end{definition}

\begin{defn}
A {\em family of stable expanded configurations} of degree $n$ over a scheme $S$ is given by
\begin{enumerate}
\item {a family of expansions $(X'_S, D'_S) \to (X,D)\times S$ of the pair $(X,D)$ parametrized by $S$, that is} an object of $\cT(S)$;
\item $n$ sections $\sigma_i:S\to X'_S$;
\end{enumerate}
such that for every $s\in S$ the fiber over $s$ is a stable expanded configuration of degree $n$.
\end{defn}

Morphisms of families of stable expanded configurations are defined as before; we denote the resulting category $X_{D}^{[n]}$, and again it is a stack with a natural map to $X^n$. 

\begin{lemma}\label{Lem:basic-pairs}
Assume $X= \PP^1$ and $D=\{0\}$. Consider the moduli stack of weighted $(n+1)$-pointed stable maps $\ocM:=\ocM_{0; (\epsilon,\ldots,\epsilon,1)}(\PP^1,1)$ of degree 1, with its last evaluation map $ev:=ev_{n+1}: \ocM \to \PP^1$. Then $X_{D}^{[n]} = ev^{-1} (\{0\})$ {and hence} is a smooth projective variety, {therefore the morphism $X_D^{[n]}\to X^n$ is projective}. 
\end{lemma}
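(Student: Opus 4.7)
The plan is to follow the same strategy as Lemma \ref{Lem:basic-deg}, adapting it to the pair setting. First I would unpack both sides explicitly: an expansion $(X(\ell), D(\ell))$ of $(\PP^1, \{0\})$ is a chain of $\ell+1$ copies of $\PP^1$, since $D = \{0\}$ is a point and hence each $P_j = \PP(N_{D/X_1}\oplus \cO_D) \cong \PP^1$; the natural contraction $f: X(\ell) \to X = \PP^1$ is an isomorphism on $X_1$ and collapses every $P_j$ to $0$, while $D(\ell)$ is the free endpoint of the chain.

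To build the map $X_D^{[n]} \to \ocM$, from a configuration $(X(\ell), D(\ell), \sigma_1, \ldots, \sigma_n)$ I would produce the weighted stable map by taking $C := X(\ell)$, marked points $x_i := \sigma_i$ of weight $\epsilon$ for $i \le n$ and $x_{n+1} := D(\ell)$ of weight $1$, and the contraction $f$ as above. Then $f(x_{n+1}) = 0$, so this lies in $ev^{-1}(\{0\})$. To verify Hassett stability for $g=0$ (requiring node count plus total weight strictly exceed $2$ on each contracted component): the non-contracted $X_1$ is automatic; a contracted interior $P_j$ has two nodes and at least one $\sigma_i$, giving $2 + \epsilon > 2$; the endpoint $P_\ell$ has one node, the weight-$1$ point $x_{n+1}$, and at least one $\sigma_i$, giving $1 + 1 + \epsilon > 2$.

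For the inverse direction, given $(C, x_i, f) \in ev^{-1}(\{0\})$, exactly one component $C_0$ maps isomorphically to $\PP^1$ (as $\deg f = 1$) and all others are contracted. The key combinatorial point is that $C$ must be a chain rooted at $0 \in C_0$: any rational tail of $C$ other than $C_0$ is contracted, and stability forces its total marked weight to exceed $1$; since $n\epsilon < 1$, only a tail containing $x_{n+1}$ can be stable, so the contracted locus has a unique tail, hence is a chain. Moreover $x_{n+1}$ maps to $0$, so this chain is attached exactly at $f^{-1}(0)$. Labeling the chain $C_0, C_1, \ldots, C_\ell$ with $x_{n+1} \in C_\ell$ and setting $X_1 := C_0$, $P_j := C_j$, $D(\ell) := x_{n+1}$, stability of the contracted bridges $C_1, \ldots, C_{\ell-1}$ (node count $2$, needing weight $> 0$) and of $C_\ell$ forces each $P_j$ to contain at least one $\sigma_i$, recovering a stable expanded configuration. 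The two constructions are mutually inverse, and since there are no nontrivial automorphisms on either side (the $\sigma_i$ rigidify the $\GG_m^\ell$ action, while degree $1$ rigidifies $C_0$), this works for families.

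Finally, $X_D^{[n]}$ is thereby identified with the closed subscheme $ev^{-1}(\{0\})$ of the smooth projective variety $\ocM_{0;(\epsilon, \ldots, \epsilon,1)}(\PP^1,1)$ (cf.\ the references cited at the end of the proof of Lemma \ref{Lem:basic-deg}), and the map to $X^n$ is the product of the remaining evaluation maps $ev_1 \times \cdots \times ev_n$, hence projective; smoothness of this fiber follows because $ev_{n+1}$ is smooth, its source being a smooth moduli of weighted stable maps to a homogeneous target with degree covering the class of the fiber. The main obstacle is the combinatorial rigidity step forcing the contracted locus to be a single chain attached at $0$; everything else is bookkeeping.
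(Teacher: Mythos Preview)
Your proposal is correct and follows essentially the same approach as the paper's proof: build the forward map by adjoining $D(\ell)$ as the weight-$1$ marked point, and for the inverse show that every contracted tail must carry $x_{n+1}$ (since $n\epsilon<1$), forcing $C$ to be a chain. You have simply filled in details the paper leaves to the reader---the explicit Hassett stability checks and the smoothness of $ev_{n+1}$ via homogeneity of $\PP^1$---so the arguments are the same, yours being more thorough; the phrase ``degree covering the class of the fiber'' appears to be a slip carried over from the degeneration setting and should be dropped.
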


\begin{proof}
We briefly review the argument which is  very similar to that of Lemma \ref{Lem:basic-deg}. The morphism $X_{D}^{[n]}\to \ocM$ is defined by using $D$ as $(n+1)$-st section; it clearly maps to $ev^{-1}(0)$. To define the opposite morphism, note that if $(C,x_i,f)$ is a point in $\ocM$, the curve $C$ can have only two tails, one mapping isomorphically to $\PP^1$ and the other containing the $(n+1)$-st section, so it is a chain of rational curves.

As in Lemma \ref{Lem:basic-deg}, smoothness and projectivity follow from \cite[Theorem 1.9]{AG}, \cite[Theorem 1.1.4]{BayerManin}, \cite[Theorem 1.7]{MustataMustata}, \cite[Theorem 1]{Hassett}).
\end{proof}

\begin{proposition}\label{Prop:conf-pairs}
The stack $X_{D}^{[n]}$ is smooth and algebraic, and the morphism $X_{D}^{[n]} \to X^n$ is projective. In particular, if $X$ is a scheme, or a Deligne--Mumford stack, so is $X_{D}^{[n]}$.
\end{proposition}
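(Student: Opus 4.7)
The plan is to mirror the three-case strategy used in Proposition \ref{Prop:conf-deg}, reducing the statement to the basic pair case $(\PP^1,\{0\})$ already handled by Lemma \ref{Lem:basic-pairs}.

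First I would treat the affine case $X=\AA^1$, $D=\{0\}$. Since $(\AA^1,\{0\})$ is the restriction of $(\PP^1,\{0\})$ to an open subset, the evident pair analogue of Lemma \ref{Lem:config-open} (immediate from the definitions, since the universal expansion $X'=\cA'\times_\cA X$ respects open inclusions in the base pair) gives
$$(\AA^1)^{[n]}_{\{0\}} = (\AA^1)^n\times_{(\PP^1)^n}(\PP^1)^{[n]}_{\{0\}},$$
so smoothness and projectivity of $(\AA^1)^{[n]}_{\{0\}}\to(\AA^1)^n$ are inherited from Lemma \ref{Lem:basic-pairs}. Next I would handle the stacky case $X=\cA$, $D=\cD$. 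Here $(\cA,\cD)=[(\AA^1,\{0\})/\GG_m]$, and the $\GG_m$-action on $(\AA^1,\{0\})$ lifts to the universal expansion. On the configuration space it gives rise to a factorwise action of $\GG_m^n$ on $(\AA^1)^n$ and on $(\AA^1)^{[n]}_{\{0\}}$, producing
$$\cA_\cD^{[n]} = \bigl[(\AA^1)^{[n]}_{\{0\}}/\GG_m^n\bigr],$$
which is algebraic, smooth, and projective over $\cA^n$; as in Case 2 of Proposition \ref{Prop:conf-deg}, projectivity uses that the relatively ample line bundle on the toric variety $(\AA^1)^{[n]}_{\{0\}}$ admits a $\GG_m^n$-equivariant structure by Section 3.4 of \cite{fulton1993introduction}.

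For the general case, the divisor $D$ defines a smooth morphism $X\to\cA$ with $D=X\times_\cA\cD$, and by the description recalled before the statement the universal expansion $(X',D')$ is the pullback of $(\cA',\cD')$ along $X\to\cA$. I would then argue that the natural morphism
$$X_D^{[n]}\longrightarrow X^n\times_{\cA^n}\cA_\cD^{[n]}$$
obtained by composing the sections $\sigma_i:S\to X'_S$ with $X'\to\cA'$ and with $X'\to X$ is an isomorphism of stacks. An inverse is constructed exactly as in the general case of Proposition \ref{Prop:conf-deg}: given a family of stable expanded configurations $\tilde\sigma_i:S\to \cA'_S$ together with compatible morphisms $g_i:S\to X$, the universal property of $X'_S=\cA'_S\times_{\cA}X$ produces unique sections $\sigma_i:S\to X'_S$. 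Once the identification as a fibered product is in hand, smoothness, algebraicity, and projectivity of $X_D^{[n]}\to X^n$ follow from base change of the corresponding properties in Case 2, and the scheme/DM statement is then automatic from projectivity.

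The main technical step is checking that this fiber product description really is an isomorphism of stacks and not just of coarse spaces: one must verify that the stability condition (each exceptional $P_j$ contains some $\sigma_i$) is preserved and reflected on both sides, and that automorphisms match. This is where the identification of $(X',D')$ with the pullback of $(\cA',\cD')$ does the work, because the exceptional components, their sections, and their $\GG_m^\ell$-automorphism groups on the two sides coincide; apart from this bookkeeping the proof is essentially formal.
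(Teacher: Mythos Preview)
Your proposal is correct and follows essentially the same three-case strategy as the paper's own proof: the affine case $(\AA^1,\{0\})$ by restriction from Lemma~\ref{Lem:basic-pairs} via the pair analogue of Lemma~\ref{Lem:config-open}, the universal case $(\cA,\cD)$ as the $\GG_m^n$-quotient with projectivity preserved by the equivariant line bundle, and the general case via the isomorphism $X_D^{[n]}\simeq X^n\times_{\cA^n}\cA_\cD^{[n]}$ with inverse constructed exactly as in Proposition~\ref{Prop:conf-deg}. Your closing paragraph on matching stability and automorphisms is more explicit than the paper, which simply says ``this is an isomorphism exactly as in Proposition~\ref{Prop:conf-deg}''.
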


\begin{proof}

{\sc Case 1:} $X= \AA^1$ and  $D= \{0\}$. This case is the restriction of Lemma \ref{Lem:basic-pairs} to the open subscheme $(\AA^1)^n\subset (\PP^1)^n$, and the analog of Lemma \ref{Lem:config-open} holds.

Note that in this case there is an action of $\GG_m$ on $\AA^1$ fixing $\{0\}$. This induces an action of
$\GG_m^{n}$ on $X_{D}^{[n]} \to X^n$.

{\sc Case 2:}  $X= [\AA^1/\GG_m]=\cA$ and $D= [\{0\}/\GG_m]$.  As in the proof of Proposition \ref{Prop:conf-deg}, this is the quotient of the previous case by the action of $\GG_m^{n}$.
Projectivity is preserved, since again the relatively ample line bundle admits an  equivariant structure.

This is a universal case, the resulting  configuration stacks are denoted by $\cA_{\cD}^{[n]}\to \cA^n$.

{\sc General case:} The pair $(X,D)$ defines a smooth morphism $X \to \cA$. Composing any family of stable expanded configurations of degree $n$ on $(X,D)$ with this structure morphism gives a family of stable configurations of $(\cA,\cD)$, hence we get a natural morphism   $$X_{D}^{[n]} \to X^n \times_{\cA^n} \cA_{\cD}^{[n]}.$$ This is an isomorphism exactly as in Proposition \ref{Prop:conf-deg}.
\end{proof}

\section{Properness of the stack of predeformable maps}\label{Sec:proper} 

Defining numerical invariants via virtual classes requires constructing a suitable moduli stack of DM type, proving its properness and constructing a perfect obstruction theory. 

For instance if $\pi:W\to B$ is a smooth projective morphism, with $B$ an affine scheme, then the connected components of the stack $K(W)$ of stable maps to the fibers of $\pi$ are proper over $B$ and carry a relative perfect obstruction theory; this allows to define GW invariants and prove their invariance under deformations. Properness of $K(W)\to B$ holds under the weaker assumption that $W\to B$ be flat and projective; however, in this case the natural obstruction theory is not perfect. 

Both the properness results and the construction of perfect obstruction theories are the key technical core of Jun Li's statement and proof of the degeneration formula for GW invariants in the language of algebraic geometry. In this section we want to give an alternative proof of the properness part, extending it to the case of DM stacks; we describe the set-up following the notation of \cite[Definitions 3.1.4, 3.2.9, C.1.6]{AF}, where we combine the cases of pairs and degenerations. 

\subsection{Notation and background}

We let $W\to B$ be either one of the cases of interest: a degeneration in the sense of Convention \ref{degeneration_conv}, or a first-order smoothable variety $W\to B=\{b_0\}$ as in Section \ref{Sec:other}, or the trivial map from $X$ to a point in the context of Convention \ref{pairs_conv}; the notation $K(W)$ denotes the algebraic stack of stable maps to the fibers of $W\to B$.  {The {\em special locus} $\cW^{\spe}$ of $\cW$ is either  $\cW^{\spe} = \cW^\sing$ the singular locus for a degeneration, or $\cW^{\spe} = \cW^\sing\cup D$ in the case of a pair.}

We use $T$ to denote the corresponding stack of expansions:  either the stack of expanded degenerations $\fT_B$ in the first two cases,  or the stack of expanded pairs $\cT$; we have a natural morphism $T\to B$ and a universal expansion $W' \to W\times_BT$. If $P_j$ is an exceptional components in an expansion, we call its {\em interior} the complement $P_j^o$ of {$D_j^+\cup D_j^-$}.

We denote by $\fK$ the stack of twisted stable maps to the fibers of $W' \to T$; 
while our target is allowed to be a Deligne Mumford stack, we keep the terminology lighter by always using the shorter term {\em stable maps} to refer to either stable maps in case $W$ is a scheme, or {\em twisted stable maps} as in \cite{AV} in case it is a stack.

A point of $\fK$ is a tuple $(C,\Sigma, \cW, f)$ where $\cW\to W$ is a point in $T$, and $(C,\Sigma, f)$ is a stable map to $\cW$, where $\Sigma$ is the ordered set of marked points. Such a stable map is called {\em degenerate} if there is an irreducible component $X$ of $C$ 
mapping to $\cW^\spe$; otherwise it is called {\em non-degenerate}.  A non-degenerate map is called {\em predeformable} if {all points on $C$ mapping to the divisor $D\subset \cW$ are marked, and if } near each point mapping to $\cW^\sing$ the map is given on strict henselizations  by $$(k[x,y,z_1,\ldots,z_d]/xy)^\sh \to (k[u,v]/uv)^\sh\qquad x\mapsto u^r, y\mapsto v^r$$ for some $r>0$. Here and later  {\em sh} means strict henselization and may be thought of as a choice of local analytic coordinates.

We denote by $\fK_{\nd}$ the open substack of $\fK$ whose points are non-degenerate maps, and by $\fK_\pd\subset \fK_{\nd}$ the locally closed substack of predeformable maps, with its reduced substack structure; while in \cite[Pages 541-547]{Li1} the locus $\fK_\pd$ is endowed with a  delicate schematic structure, we do not need it since all we are interested in is properness. Finally, we denote by $K\subset \fK$ the maximal open DM substack, and we let $K_{\nd}:=K\cap \fK_{\nd}$ and $K_\pd:=K\cap \fK_\pd$. 

Let $(C,\Sigma, \cW, f)$ be a point of $\fK$; we call a smooth rational irreducible component $X$ of $C$  {\em semistable}, if it maps to a fiber of an exceptional component  $P_j$ of $\cW$, with exactly one point of $X$ meeting each of $D_j^+$ and $D_j^-$, and no other marked or singular points of $C$ on $X$. 
 (The terminology in \cite[Proof of Lemma 3.2]{Li1} is {\em trivial component}.)

Semistable components can be used to characterize points of $\fK$ belonging to the Deligne--Mumford locus $K$, see \cite[Lemma 3.2]{Li1}, \cite[Definition 3.1.5]{AF}.

\begin{lemma}\label{pointsincK} A point $(C,\Sigma,\cW,f)$ of $\fK$ is not in the Deligne--Mumford locus $K$ if and only if there is an exceptional component $P_j$  such that every component of $C$ whose image meets  the interior $P_j^o=P_j\setminus (D_j^+\cup D_j^-)$  is a semistable component.
\end{lemma}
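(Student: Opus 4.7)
The plan is to reduce the Deligne--Mumford property to finiteness of an explicit automorphism scheme, which is controlled by the torus $\Aut(\cW)=\GG_m^\ell$. First I would set up the exact sequence
$$1 \longrightarrow \Aut(C,\Sigma,f) \longrightarrow \Aut(C,\Sigma,\cW,f) \longrightarrow \Aut(\cW) = \GG_m^\ell,$$
whose kernel consists of automorphisms of the stable map fixing the target and is therefore finite. Consequently, a point lies in $K$ if and only if the image on the right is $0$-dimensional, equivalently, if and only if for every exceptional component $P_j$ the set of $\lambda\in \GG_m$ admitting a lift to an automorphism of $(C,\Sigma,f)$ over the scaling $\psi_\lambda$ of $P_j$ is finite. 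Thus the lemma reduces to the local claim: this set is infinite if and only if every component of $C$ whose image meets $P_j^o$ is semistable.

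For the sufficiency direction, assume every such component $X$ is semistable. Then $X\cong\PP^1$ meets the rest of $C$ only at two nodes, lying over $D_j^-$ and $D_j^+$ respectively, and $f|_X$ is a degree-$r$ cover of its image fiber totally ramified at these nodes, hence given in coordinates by $z\mapsto z^r$. For each $\lambda\in \GG_m$ and each $r$-th root $\mu$ of $\lambda$, the automorphism $z\mapsto \mu z$ fixes the two nodes and intertwines $f|_X$ with $\psi_\lambda$; on all components of $C$ not meeting $P_j^o$ we take $\phi_\lambda$ to be the identity, and compatibility at the nodes is automatic. Choosing $\mu$ continuously in $\lambda$ gives a $\GG_m$-subgroup of the automorphism scheme, proving non-DM.

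For the necessity direction, assume some component $X$ with image meeting $P_j^o$ is not semistable, and argue that only finitely many $\lambda$ admit a compensating $\phi_\lambda$. Three cases must be handled: $X$ contracted to a point of $P_j^o$, in which case $\psi_\lambda$ moves the image and continuity forbids any non-trivial lift; $X$ non-contracted but not contained in a single fiber of $P_j\to D_j$, in which case composing $f|_X$ with the projection to $D_j$ is $\psi_\lambda$-invariant and forces $\phi_\lambda$ into a finite group of deck transformations, while the fiber-direction equivariance then selects only finitely many $\lambda$; and $X$ contained in a fiber but carrying an additional marked or nodal point over $P_j^o$ or a second preimage of $D_j^\pm$, in which case $\phi_\lambda$ must fix that extra special point while $\psi_\lambda$ moves its image unless $\lambda$ is a root of unity. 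I expect the third case to be the main technical obstacle, since semistability bundles together several combinatorial requirements (exactly one preimage of each of $D_j^-$ and $D_j^+$, no other special points, map into a single fiber) and one must check each possible deviation individually; nonetheless the verification is local on $X$ and reduces to a routine case check.
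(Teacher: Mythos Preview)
Your approach is correct and is essentially the same as the paper's: both reduce the Deligne--Mumford condition to whether a positive-dimensional subgroup of $\Aut(\cW)=\GG_m^\ell$ lifts to $C$, and then check factor by factor that the $\GG_m$-action on $P_j$ lifts precisely when every component of $C$ hitting $P_j^o$ is semistable. The paper's proof is a two-line pointer to \cite[Lemma 3.2]{Li1} with the assertion that this last step is ``easy to see'', whereas you have written out the exact sequence and the case analysis explicitly; the only places to tighten are the phrase ``choosing $\mu$ continuously in $\lambda$'' (better: parametrize by $\mu$ and let $\lambda=\mu^r$, or take $\operatorname{lcm}$ of the degrees when there are several semistable components) and the tacit use of connectedness to ensure $\phi_\lambda$ preserves each component in the necessity direction.
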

\begin{proof}
The following argument can be found in the proof of \cite[Lemma 3.2]{Li1}. A point is in $K$ if and only if no positive dimensional subgroup of the group of automorphisms of the expansion $\cW$ lifts to an automorphism of $C$. The Deligne--Mumford condition is equivalent to ensuring that for each $P_j$ there is at least one component $X$ mapping to $P_j$ to which no covering of the $\GG_m$ action lifts. It is easy to see that the only components whose image meets $P_j^o$ to which the action lifts are exactly the semistable components.
\end{proof}

\begin{notation}\label{notval}
 When we use the valuative criterion, we always take $\Delta=\Spec R$ for $R$ a discrete valuation ring, with generic point $\eta$ and closed point $s$. 
 Given a commutative diagram $$
\xymatrix{
\eta\ar[r]^{f_\eta}\ar[d] &X\ar[d]^p\\
\Delta\ar[r]_{g} & Y}
$$
we will view liftings of $g$ to $X$ as a groupoid whose objects are the two-commutative diagrams  
$$
\xymatrix{
\eta\ar[r]^{f_\eta}\ar[d] &X\ar[d]^p\\
\Delta\ar[ru]^{f}\ar[r]_{g} & Y}
$$
and whose morphisms $f\Rightarrow f'$ are those sending the two-commutative diagram of $f$ to that of $f'$. We will sometimes write ``there is a unique lifting with certain properties'' to mean ``there exists a lifting with certain properties, and it is unique up to unique isomorphism''.
\end{notation}

\subsection{Statement and proof}

The aim of this section is to prove the following theorem:

\begin{theorem}\label{propermappd} The  morphism $K_{pd}\to K(W)$ induced by $K_{pd} \subset \fK$ is proper. 
\end{theorem}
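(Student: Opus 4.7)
The plan is to verify the valuative criterion of properness for $K_{pd}\to K(W)$, using the projectivity of the configuration spaces constructed in Propositions~\ref{Prop:conf-deg} and~\ref{Prop:conf-pairs}. Fix a DVR $R$ with generic point $\eta$, closed point $s$, and set $\Delta=\Spec R$. Given compatible morphisms $\Delta\to K(W)$ corresponding to a stable map $(C',\Sigma',\bar f\colon C'\to W)$ and $\eta\to K_{pd}$ corresponding to a predeformable map $(C_\eta,\Sigma_\eta,\cW_\eta,f_\eta)$, the task is to construct, uniquely up to unique isomorphism and after at most a finite extension of $R$, a $\Delta$-lift in $K_{pd}$. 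The guiding idea is to produce the $\Delta$-family of expansions $\cW$ from $\cW_\eta$ by adjoining auxiliary sections of $W\to B$ that rigidify the expansion, and then to extend $f_\eta$ to a predeformable $f\colon C\to\cW$ via a local analysis at the nodes of the special fibre.

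For the expansion step, let $\ell$ be the number of exceptional components of $\cW_\eta$ (or, if $\cW_\eta=W_{b(\eta)}$ is smooth, the expected number of exceptional components in the limit, read off from the nodes of $C'_s$ whose image under $\bar f_s$ lies in $\cW^\spe_s$). Using Lemma~\ref{pointsincK} and the predeformable structure of $f_\eta$, one selects $\ell$ auxiliary sections $\tau_j$ of $W\to B$ whose $\eta$-points lie in the interiors $P_j^o$ of the exceptional components of $\cW_\eta$ (in the smooth-generic case, smooth $\eta$-points of $W_{b(\eta)}$ projecting to the required nodes of $C'_s$ in the limit). By properness of $W\to B$ these sections extend uniquely over $\Delta$ after a finite extension of $R$. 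The tuple $(\cW_\eta,\Sigma_\eta\cup\{\tau_j\})$ is then a stable expanded configuration of degree $n+\ell$, defining an $\eta$-point of $W^{[n+\ell]}_\pi$ compatible with the induced $\Delta$-point of $W^{n+\ell}_\pi$. Projectivity of $W^{[n+\ell]}_\pi\to W^{n+\ell}_\pi$ (Proposition~\ref{Prop:conf-deg}, respectively~\ref{Prop:conf-pairs}) forces the generic lift to extend uniquely to a $\Delta$-lift, producing the desired $\cW\to\Delta$ together with lifts $\tilde\sigma_i,\tilde\tau_j\colon\Delta\to\cW$.

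It remains to construct $f\colon C\to\cW$ over $\Delta$ extending $f_\eta$ and with stabilization matching $\bar f$. The projective birational modification $\cW\to W\times_B\Delta$ is an isomorphism away from $\cW^\spe$, which determines $f$ outside the preimage of $\cW^\spe$. At each node $q$ of $C'_s$ whose image $\bar f_s(q)$ lies in $\cW^\sing_s$, the predeformable local model $(u,v)\mapsto(u^r,v^r)$ is the unique $\Delta$-extension of the germ of $f_\eta$ after the cyclic base change $R\subset R[t^{1/r}]$, where $r$ is the generic ramification index recorded by $f_\eta$; inserting the corresponding chains of semistable rational components at these nodes yields the curve $C$ and the predeformable map $f$. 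The presence of $\tilde\tau_j$ on each exceptional component $P_j$ of $\cW_s$ then guarantees, by Lemma~\ref{pointsincK}, that the resulting point of $\fK$ lies in the Deligne--Mumford locus $K$. The main obstacle is precisely this final map-lifting step: although the configuration-space argument cleanly separates off the choice of expansion, extending $f_\eta$ itself still requires the node-by-node root calculation familiar from \cite[Lemma~3.2]{Li1} and \cite{AF}, now reduced by Proposition~\ref{Prop:conf-deg} to the universal slice $\varpi\colon\cA^2\to\cA$ of local equation $t=xy$, where it amounts to extracting compatible $r$-th roots of a uniformizer. Uniqueness of the $\Delta$-extension then follows from uniqueness in the configuration valuative criterion together with the local uniqueness of the predeformable lift; different choices of the auxiliary $\tau_j$ produce canonically isomorphic extensions in $K_{pd}$ since these markings are forgotten in the target.
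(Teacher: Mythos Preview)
Your proposal has the right high-level shape---valuative criterion plus an auxiliary configuration---but it diverges from the paper's argument at two essential points, and both are genuine gaps rather than alternate routes.

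First, your auxiliary sections are placed on the wrong object. You choose $\tau_j$ as sections of $W\to B$ whose $\eta$-points lie in the interiors $P_j^o$ of the exceptional components of $\cW_\eta$. But every exceptional component of $\cW_\eta$ contracts to the singular locus $D\subset W_0$ under $\cW_\eta\to W$, so the images of such $\tau_j$ in $W$ all land in $D$; they carry no information distinguishing the exceptional components, and as points of $W$ they lie in $\cW^\spe$, so they cannot belong to a stable expanded configuration at all. (You also invoke properness of $W\to B$, which is not assumed in the theorem---only in Corollary~\ref{C:pd-proper}.) The paper instead picks sections $\bar p_1,\dots,\bar p_N$ of the \emph{source curve} $\bar C\to\Delta$, one through each irreducible component of $\bar C_s$, lying in the smooth unmarked locus (Proposition~\ref{Prop:sections}). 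Their images under $\bar f$ give a $\Delta$-point of $W^N_\pi$, and because every non-semistable component of $C_\eta$ carries a $p_i$, the configuration $(\cW_\eta,f_\eta(p_i))$ is stable by Lemma~\ref{pointsincK}. Properness of $W^{[N]}_\pi\to W^N_\pi$ then produces the unique lift $a_0:\Delta\to T$.

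Second, and more seriously, you try to build the extended map $f:C\to\cW$ by hand via node-by-node root extractions, and you correctly flag this as ``the main obstacle''. The paper bypasses this entirely: Lemma~\ref{propfp} shows that $\fK\to K(W)\times_B T$ is proper (it is the stable-maps stack of $\cW\to W_T$, proper by \cite[9.1.3]{AV}), so any lift $a:\Delta\to T$ automatically and uniquely induces a lift $(C^a,\Sigma^a,f^a)$ to $\fK$. No local construction of $f$ is needed. What remains is to show that the particular lift coming from $a_0$ lands in $K_\pd$ (Proposition~\ref{ndifstable}: the sections $p_i$ force nondegeneracy and the DM condition, then Lemma~\ref{predefisclosed} gives predeformability) and that any other lift landing in $K_\pd$ must agree with $a_0$ (Proposition~\ref{stblifnd}: such a lift makes $f^a_s(P)$ a stable configuration, hence equals $a_0$ by uniqueness in the valuative criterion for $W^{[N]}_\pi$). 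Your local map-lifting paragraph should be replaced by this properness statement; once it is, the argument closes.
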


\begin{lemma}\label{predefisclosed} The  substack  of predeformable maps 
 is closed in $\fK^u_{\nd}\subset \fK^u$, the open locus of nondegenerate maps.
\end{lemma}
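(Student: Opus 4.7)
Since the predeformable locus is locally closed in $\fK^u_{\nd}$ (as stated above), it suffices to verify closedness under specialization. Following Notation \ref{notval}, take $\Delta = \Spec R$ for a DVR $R$ with generic point $\eta$ and closed point $s$, and consider a family $(\cC,\Sigma,\cW,f)\in\fK^u_{\nd}(\Delta)$ whose restriction to $\eta$ is predeformable; the goal is to show the restriction to $s$ is also predeformable.

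The predeformability condition has two parts, which I handle separately. The first (applicable in the pair case) requires that every point of $\cC$ mapping to $D\subset\cW$ be marked. Non-degeneracy of $f$ on each fiber ensures that no irreducible component of $\cC$ maps into $D$, so $f^{-1}(D)$ is a relative Cartier divisor on $\cC$, hence flat and finite over $\Delta$. Every irreducible component of $f^{-1}(D)$ therefore surjects onto $\Delta$, and each point of $f^{-1}(D)\cap\cC_s$ is a specialization of a point of $f^{-1}(D)\cap\cC_\eta$. Generic predeformability places the latter inside $\Sigma$, which is closed in $\cC$; hence $f^{-1}(D)\cap\cC_s\subset\Sigma$ as required.

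For the second part---the balanced local form at each node of $\cC$ mapping to $\cW^{\sing}$---let $p\in\cC_s$ with $q:=f(p)\in\cW^{\sing}_s$. A parallel specialization argument, combined with the fact that nodes specialize to nodes in flat families of nodal curves, identifies $p$ as a node of $\cC$ mapping to a node of $\cW$. Passing to strict henselizations, $\cC$ and $\cW$ take the standard forms $(R[u,v,\ldots]/(uv-a))^{\sh}$ and $(R[x,y,z_\bullet]/(xy-b))^{\sh}$ for some $a,b\in R$, and the map is determined by $x\mapsto F$, $y\mapsto G$ with $FG=b$. Expanding $F$ on the branch $\{v=0\}$ and $G$ on $\{u=0\}$ as monomials in the branch coordinates times elements of $R$, the equation $FG=b$ combined with non-degeneracy of $f_s$---which forbids contracting either branch of $\cC_s$ onto $q$---forces the $R$-coefficients to be units and the two branch exponents to agree. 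This yields exactly the form $x\mapsto u^r$, $y\mapsto v^r$ demanded by predeformability. The main obstacle is this last local analysis: it is the non-degeneracy hypothesis on the \emph{special} fiber (not merely on the generic fiber) that rules out the unbalanced limits that would otherwise arise from a mismatch between the smoothing rates of $\cC$ and $\cW$.
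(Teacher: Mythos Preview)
The paper's proof simply notes that the statement is \'etale-local and cites Jun Li's computation \cite[Lemma 2.7]{Li1}; your approach---reduce to specialization over a DVR and analyze the \'etale-local picture at each point of $\cC_s$ landing in $\cW^{\spe}$---is exactly what that computation does. But there is a genuine gap in your second part.

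The ``parallel specialization argument'' you invoke to conclude that $p$ is a node of $\cC_s$ does not carry over from the first part. For $D$ it worked because $f^{-1}(D)$ is a relative Cartier divisor on $\cC$, hence flat over $\Delta$, so every special-fiber point lifts to the generic fiber. But $\cW^{\sing}$ is not a divisor in $\cW$: in the local model $xy=b$ it is cut out by $x=y=0$, and when $b\neq 0$ in $R$ it is supported entirely over $s$. Then $f^{-1}(\cW^{\sing})_\eta=\varnothing$ and $p$ is the specialization of nothing, so you cannot import predeformability from $\eta$ this way. The correct argument---what Li does, and what the paper reproduces in the closely related setting of Lemma~\ref{Lem:chain}---is a direct local computation: assuming $p$ smooth, one uses $FG=b$ in $R[[t]]$ together with Weierstrass preparation to produce, after base change, a smooth point of $\cC_\eta$ mapping to $\cW^{\sing}_\eta$, contradicting predeformability of $f_\eta$.

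Your final step has the same flavor of gap. Non-degeneracy of $f_s$ alone only tells you each branch of $\cC_s$ at $p$ maps non-constantly to \emph{some} branch of $\cW_s$, yielding exponents $r_1,r_2>0$; it neither forces the two branches of $\cC_s$ to go to distinct branches of $\cW_s$ nor gives $r_1=r_2$. Both of these are consequences of the relation $FG=b$ holding in $(R[u,v]/(uv-a))^{\sh}$ rather than merely on the special fiber, and extracting them requires expanding $F$ and $G$ in the $R$-basis $\{u^i,v^j\}$ and comparing coefficients. That computation is precisely the content of Li's lemma and cannot be waved away.
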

\begin{proof} This is a local statement in the \'etale topology; hence we can use the proof given by Jun Li, see \cite[Lemma 2.7]{Li1}.
\end{proof}


\begin{lemma}\label{Lem:chain} Let $(C,\Sigma, f:C\to\cW)\in \fK$ be a point in the closure of $\fK_\pd$; denote $(\bar C,\bar \Sigma,\bar f)$ its image in  $K(W/T)$, i.e. the stabilisation of the composition of $f$ with the structure morphism $\cW\to W$. Then any connected component $Z$ of the locus in $C$ contracted in $\bar C$ is a chain of rational curves which maps to either a marked or singular point of $\bar C$.
\end{lemma}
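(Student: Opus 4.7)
The plan is to proceed by the valuative criterion. I fix a DVR $R$ with generic point $\eta$ and closed point $s$, together with a morphism $\Spec R\to \fK$ whose generic composition factors through $\fK_\pd$ and whose closed composition is $(C,\Sigma,\cW,f)$. This produces a family $(C_R,\Sigma_R,\cW_R,f_R)$ with predeformable generic fibre. Composing $f_R$ with the structure morphism $\cW_R\to W\times_B\Spec R$ and stabilising yields a family $(\bar C_R,\bar\Sigma_R,\bar f_R)$ of stable maps to the fibres of $W\to B$ together with a contraction morphism $\mu_R\colon C_R\to \bar C_R$; its restriction to the closed fibre is the stabilisation $\mu\colon C\to \bar C$ appearing in the statement.

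I first observe that each connected component $Z$ of the exceptional locus of $\mu$ is a tree of smooth rational curves collapsed to a single point $\bar p\in \bar C$; this is the general behaviour of the universal stabilisation of pre-stable maps, which iteratively contracts unstable rational components. Next, since $\bar f$ is constant on $Z$, the image $f(Z)\subset \cW$ lies in a single fibre of $\cW\to W$. The non-trivial fibres of this morphism are precisely chains of $\PP^1$'s, obtained as the fibres over a point of $D\subset W$ of the $\PP^1$-bundles $P_j\to D$ for $j=1,\ldots,\ell$.

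The heart of the argument is to promote this tree of rational curves to a chain, and to identify $\bar p$ as a marked or singular point of $\bar C$. For the chain structure, I use the local model of predeformability: on the generic fibre, each node of $C_\eta$ lying over $\cW_\eta^\sing$ is \'etale-locally of the form $uv=0\to xy=0$, $u\mapsto x^{1/r}$, $v\mapsto y^{1/r}$, so exactly two branches of $C_\eta$ meet at such a node, one mapping onto each branch of $\cW_\eta$. The preimage $f_\eta^{-1}$ of any chain of fibres of $\cW_\eta\to W_\eta$ therefore has a chain-shaped dual graph, because source-side branching would force more than two branches over a node of the target chain, contradicting the predeformable local model. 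Transferring this information across the family and identifying the resulting graph with $Z$ on the closed fibre rules out vertices of valence $\ge 3$ in the dual graph of $Z$. For the marked/singular conclusion I argue that if $\bar p$ were a smooth unmarked point of $\bar C$, then $Z$ would be a rational tail attached to the rest of $C$ at a single node; combining this with the chain structure of $Z$ and the stability of $(C,\Sigma,f)$ as an object of $\fK$, this possibility is excluded unless one of the sections $\sigma_i$ lies on $Z$ (making $\bar p$ marked) or $Z$ attaches at two distinct points (making $\bar p$ a node of $\bar C$).

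The main obstacle I anticipate is the execution of the third step, specifically the transfer of the chain structure from the generic to the closed fibre. A further technical complication is that the closed fibre may be degenerate, so Lemma~\ref{predefisclosed} does not apply to $(C,\Sigma,f)$ directly; the chain structure of $Z$ and the identification of $\bar p$ must therefore be extracted from the entire family $(C_R,\Sigma_R,f_R)$ and specialised with care, tracking how the contracted locus interacts with the components of $C$ mapping into $\cW^\sing$.
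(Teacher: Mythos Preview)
Your proposal has a genuine gap in the key third step. On the generic fibre $(C_\eta,\Sigma_\eta,f_\eta)$ the map is predeformable, hence in particular \emph{nondegenerate}: no component of $C_\eta$ maps into a fibre of $\cW_\eta\to W_\eta$. Thus the preimage $f_\eta^{-1}$ of a chain of $\PP^1$-fibres is a finite set of points (nodes of $C_\eta$), not a curve, and there is no dual graph to declare ``chain-shaped''. The contracted locus $Z$ exists only on the special fibre, so there is simply nothing to transfer. Your step 4 is also incomplete: a chain can perfectly well be a rational tail (a single $\PP^1$ with one node and no mark, mapping nonconstantly to a $\PP^1$-fibre is $f$-stable), so the chain structure together with $f$-stability does not by itself force $\bar p$ to be marked or singular.

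The paper's argument is entirely different and avoids any global dual-graph analysis. The key claim is local: in the closure of $\fK_\pd$, no smooth unmarked point $x\in C$ can map to $\cW^\spe$. If $x$ maps to $D$, then nearby fibres meet $D$ in marked points, so $x$ is in the closure of the marked locus and is itself marked. If $x$ maps to $\cW^\sing$, one writes the family in local coordinates with $\cW_\Delta^\sh$ given by $uv=a$ and $C_\Delta^\sh=\Spec R[x]^\sh$, with $f^*u=0$ and $f^*v=x^r$ on the central fibre; then $u_\Delta v_\Delta=a$ with $v_\Delta$ of positive degree in $x$ forces $u_\Delta=a=0$, so $f_\eta$ is not predeformable, a contradiction. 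This single local fact rules out any ``end component'' of $Z$ (a component with one node and no mark in $C$): such a component maps nonconstantly to a $\PP^1$-fibre by $f$-stability, and that fibre meets $\cW^\spe$ in two points, so some smooth unmarked point of the end component lands in $\cW^\spe$. Absence of end components then yields both conclusions at once: every leaf of the tree $Z$ carries an attaching node to $C\setminus Z$ or a mark, and counting these against the branches and marks at $\bar p\in\bar C$ forces $Z$ to be a chain and $\bar p$ to be marked or a node.
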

\begin{proof} Each contracted component must be rational and map to a fiber of $\cW$ to $W$, hence the second part of the statement is obvious. For the first we must prove that $Z$ has no an end component $X$, i.e. one having only one node and no marked point. 

We argue by contradiction: such an end component cannot be contracted in $\cW$ since $f$ is stable, so it  must map nontrivially to some fiber in an exceptional component of $\cW$; since each fiber intersects {$\cW^{\spe}$} in two points, there is a {non-marked} point $x$ on $X$ which is smooth on $C$ but maps to $\cW^\spe$.

We use again Jun Li's argument, since this is an {\'etale local computation. In case $x\in D$, then  in a neighborhood of $x$ the divisor $D$ intersects every nearby fiber. So $x$ is in the closure of the locus of marked points and hence it is marked, contradicting the assumption.  Now consider the case when $x$ maps to $\cW^\sing$.} By assumption there exist  a family $$(f_\Delta:C_\Delta\to\cW_\Delta , \Sigma_{\Delta})\in\fK(\Delta)$$ such that $(f_\eta:C_\eta\to\cW_\eta , \Sigma_{\eta})\in \fK_\pd$ and $$(f_s:C_s\to\cW_s , \Sigma_{s}) = (f:C\to\cW , \Sigma).$$ The problem is local at  $z$ so we may assume $$\cW_\Delta^\sh= \left(\Spec R[u,v,w_1,\ldots,w_m]/(uv-a)\right)^\sh$$ for some $a\in R$,
$$C_\Delta^\sh = \left(\Spec R[x]\right)^\sh,$$ and on the central fiber
$$f^*u=0, \quad f^*v = x^r.$$

Consider the homomorphism $f_\Delta^*:R[u,v,w_i]/(uv-a)\to R[x]$ and let $u_\Delta,v_\Delta$ be the images of $u$ and $v$; they must satisfy $u_\Delta v_\Delta=a$, and be equal to $(0,x^r)$ modulo $m_s$. In particular $v_\Delta$, viewed as a polynomial in $x$, has positive degree; therefore the only possibility that its product with $u_\Delta$ has degree zero is that $u_\Delta=a=0$, which means that $f_\eta$ is not predeformable.
\end{proof}



\begin{lemma}\label{propfp}
The commutative diagram $$\xymatrix{
\fK \ar[r]\ar[d] & T\ar[d]\\
K(W) \ar[r] & B
}$$
induces a proper morphism  $\fK \to K(W)\times_B   T$.
\end{lemma}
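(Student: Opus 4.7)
The plan is to apply the standard fact that if $h=g\circ f$ is proper and $g$ is separated, then $f$ is proper. Factor the morphism of interest as
\[
\fK \xrightarrow{f} K(W)\times_B T \xrightarrow{g} T,
\]
so that the composition $h=g\circ f$ is the canonical morphism $\fK\to T$ that remembers only the target expansion of a stable map. It then suffices to show two things: that $g$ is separated, and that $h$ is proper.

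For separatedness of $g$: this morphism is obtained from $K(W)\to B$ by base change along $T\to B$, and $K(W)\to B$ is proper (in particular separated) by the fundamental properness of the stack of (twisted) stable maps to the fibers of a projective morphism \cite{AV}.

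For properness of $h$: by construction $\fK$ parametrizes (twisted) stable maps to the fibers of the universal expansion $W'\to T$. This family is projective --- the relatively ample line bundle used in Case~2 of the proof of Proposition~\ref{Prop:conf-deg} admits an equivariant structure and so descends to the relevant stacky quotient --- and hence properness of $h$ on each connected component (fixed genus, number of marks, and curve class) follows again from \cite{AV}.

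The main subtlety is that $\fK$ and $K(W)$ are disjoint unions over discrete numerical data and are not of finite type over their respective bases, so ``proper'' must be interpreted one connected component at a time. Since the stabilization of a stable map composed with $\cW\to W$ inherits genus, marks and degree from the map itself, $f$ preserves this discrete data: each component of $\fK$ lands in a single component of $K(W)\times_B T$, and the ``two out of three'' argument applies there. As a sanity check, one can unwind the valuative criterion directly: given $\Delta=\Spec R$ mapping to $K(W)\times_B T$ and a generic lift $f_\eta:C_\eta\to\cW_\eta$ in $\fK$, the map $f_\eta$ extends to some $f_\Delta:C_\Delta\to\cW_\Delta$ by properness of stable maps to the projective family $\cW_\Delta\to\Delta$, and the stabilization of its composition with $\cW_\Delta\to W\times_B\Delta$ agrees with the given extension of $\bar f_\eta$ by separatedness of $K(W)\to B$.
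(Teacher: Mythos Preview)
Your ``two out of three'' factorization has a genuine gap: it requires both $K(W)\to B$ and $\fK\to T$ to be proper, but neither is assumed in the lemma. In the conventions of the section (see Conventions~\ref{degeneration_conv} and~\ref{pairs_conv}), $W\to B$ is only flat with smooth generic fiber --- no properness or projectivity is imposed. Indeed, Corollary~\ref{C:pd-proper} explicitly adds the hypothesis ``assume moreover that $W\to B$ is proper and has projective coarse moduli space'' precisely because it is absent here. Consequently $W'\to T$ need not be proper, so the stack $\fK$ of stable maps to its fibers need not be proper over $T$; your map $h$ fails to be proper in general (think of $W=\AA^2\to\AA^1=B$). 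Likewise your justification for separatedness of $g$ invokes properness of $K(W)\to B$, which is unavailable.

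The paper's argument sidesteps this by working \emph{relative to the target} rather than relative to the base. It identifies the fiber product $K(W)\times_B T$ with $K(W_T/T)$, the stack of stable maps to the fibers of $W_T:=W\times_B T\to T$, and observes that $\fK\to K(W_T/T)$ is the morphism induced by composing with the contraction $\cW\to W_T$. That contraction is proper (even projective) regardless of whether $W\to B$ is, and \cite[Corollary~9.1.3]{AV} then gives properness of the induced map on stable-map stacks directly. So the key input is the properness of the expansion-to-target morphism, not of the target family over the base; your factorization through $T$ loses exactly this.
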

\begin{proof}
The fibered product is $\fK(W_T/T)$ where $W_T:=W\times_B T$. The structure map $\cW\to W_T$ induces the morphism from $\fK$ to the fiber product, and it is proper by  \cite[Corollary 9.1.3]{AV}.
\end{proof}

\begin{proof}[Proof of Theorem \ref{propermappd}]
To prove properness of the morphism $K_{pd}\to K(W)$ we use the valuative criterion.  We will fix from now on a commutative diagram 
\begin{equation}\label{Eq:set-up}\xymatrix{
\eta \ar[r]\ar[d] & K_{pd}\ar[d]\ar[r] & T\ar[d]\\
\Delta\ar[r] &K(W)\ar[r] & B,}
\end{equation}
and our aim will be showing that, after a base change $\tDelta\to\Delta$, there is a unique lifting of $\Delta\to K(W)$ to $\tDelta\to K_{pd}$.
We will denote by $(\bar C, \bar \Sigma, \bar f)$ the family of stable maps over $\Delta$ corresponding to $\Delta\to K(W)$ and by $(C_\eta,\Sigma_\eta,f_\eta:C_\eta\to \cW)$ the family corresponding to $\eta\to K_{pd}$.

From Lemma \ref{propfp} it follows that
 there exists an equivalence, compatible with base change,  between liftings of $\Delta\to B$ to $T$ and liftings, up to base change $\tDelta\to\Delta$, of $\Delta\to K(W)$ to $\fK$. Given a lifting $a:\tDelta \to T$, we will denote by $(C^a,\Sigma^a, f^a:C^a\to\cW)$ the family of stable maps corresponding to the lifting $\tDelta\to \fK$ induced by $a$.
 
The key to producing a lifting   of $\Delta\to B$ to $T$ is to use an auxiliary choice of a stable configuration of points as a guide.

\begin{proposition}\label{Prop:sections}
 After a base change  $\tDelta\to\Delta$, there exist a positive integer $N$ and  closed subschemes $(\bar p_1,\ldots,\bar p_N)=:\bar P\subset \bar C$ such that\begin{enumerate}
 \item the induced morphisms $\bar p_i\to \tDelta$ are isomorphisms;
\item each $\bar p_i$ is contained in the smooth, {unmarked} locus of  $\bar C$;
\item $\bar P_\eta$ lifts uniquely to $P_\eta\subset C_\eta$;
\item every irreducible component of $\bar C_s$ intersects $\bar P_s$. 
\end{enumerate}
\end{proposition}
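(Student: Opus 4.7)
Let $U\subset\bar C$ denote the open locus of smooth, unmarked points, so that $U\to\Delta$ is smooth of relative dimension one. Let $B_\eta\subset\bar C_\eta$ be the finite set of images of irreducible components of $C_\eta$ contracted by the stabilisation $\nu:C_\eta\to\bar C_\eta$; a point of $\bar C_\eta\setminus B_\eta$ has a unique preimage under $\nu$, so condition~(3) will be equivalent to the requirement that $\bar P_\eta\cap B_\eta=\emptyset$. The plan is to construct the sections one irreducible component of $\bar C_s$ at a time, using smoothness of $U\to\Delta$ to obtain the flexibility needed to avoid $B_\eta$ on the generic fibre.

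For each irreducible component $Z$ of $\bar C_s$, stability of $(\bar C_s,\bar\Sigma_s)$ forces $Z$ to have only finitely many special (marked or nodal) points, so $Z\cap U_s$ is a nonempty open subset of $Z$, in particular one-dimensional. Fix a point $u_Z\in Z\cap U_s$. Working in an \'etale-local model $\AA^1\times\Delta$ for $U\to\Delta$ at $u_Z$ (possibly after a finite residue-field extension $\tDelta\to\Delta$ to realise the $k(s)$-point), the $R$-sections through $u_Z$ are parametrised by $u_Z+\m\cdot R\subset R$ and have pairwise distinct generic fibres; hence at most $|B_\eta|$ of them can have generic fibre in $B_\eta$, and one can pick a section $\bar p_Z$ whose generic fibre lies in $U_\eta\setminus B_\eta$.

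Combining the resulting finite base changes into a single $\tDelta\to\Delta$ and collecting the sections $\bar p_Z$ over all components $Z$ of $\bar C_s$ will yield $\bar p_1,\ldots,\bar p_N$ satisfying (1), (2), (4); by construction $\bar P_\eta\cap B_\eta=\emptyset$, which gives (3). The main obstacle will be reconciling (3) with (4)---meeting every component of the central fibre while simultaneously avoiding the finite bad set on the generic fibre. This is handled by exploiting the one-parameter freedom for sections of the smooth morphism $U\to\Delta$ through any chosen point of $U_s$, combined with the finiteness of $B_\eta$.
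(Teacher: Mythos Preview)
Your argument is correct, but it takes a longer route than the paper's.  The essential difference lies in how condition~(3) is handled.

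You introduce the finite set $B_\eta\subset\bar C_\eta$ of images of contracted components and arrange the sections so that their generic fibres avoid $B_\eta$, exploiting that there are infinitely many sections through a fixed point $u_Z\in U_s$ while $B_\eta$ is finite.  This works, though the step ``working in an \'etale-local model $\AA^1\times\Delta$'' deserves a word of care: sections constructed in the model must be transported back to $U$, which one does either via Hensel's lemma or by the paper's device of lifting a local parameter $h_s$ to a function $h$ and passing to the localisation of its zero locus (your perturbation then amounts to replacing $h$ by $h-r$ for $r\in\m_R$).

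The paper's proof is shorter because it observes that $B_\eta$ is \emph{automatically} contained in the nodal and marked locus of $\bar C_\eta$, so (3) is an immediate consequence of (2) and no avoidance argument is needed.  The reason is predeformability: any component $X\subset C_\eta$ contracted by $C_\eta\to\bar C_\eta$ must (by stability of $f_\eta$) map nonconstantly into a $\PP^1$-fibre of some exceptional $P_j$, and predeformability forces the preimages of $D_j^{\pm}$ on $X$ to be nodes or marked points of $C_\eta$.  Hence $X$ already carries two special points and contracts to a node or a marked point of $\bar C_\eta$.  The paper packages this as ``the inverse image of $\bar C^*$ in $C$ is isomorphic to $\bar C^*$'' and then simply constructs sections inside $\bar C^*$, one per component of $\bar C_s$.

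What each approach buys: yours is more robust---it would go through for an arbitrary stabilisation map, making no use of the predeformable hypothesis on $f_\eta$.  The paper's is simpler and isolates the structural reason why (3) imposes no extra constraint in this particular setting; you may find it worthwhile to record that observation even if you keep your own argument.
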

\begin{proof} {Write $\bar C^*$ for the open dense substack obtained by deleting the nodal and marked locus of $\bar C$. The inverse image of $\bar C^*$ in $C$ is isomorphic to $\bar C^*$, so once we construct $P$ inside $\bar C^*$,  condition (3) becomes a consequence of (2). Now $\bar C^*$ is a smooth representable curve over $\Delta$.  For each irreducible component  $X_{i}\subset \bar C^*_s$ let $\bar p_{i,s}$ be a closed point and  {$U_{i}\subset \bar C^*$ an affine neighborhood such that $U_i \cap \bar C^*_s\subset X_i$. There exists an element  $h_{s} \in \Gamma(\cO_{U_{i}\cap \bar C^*_s})$} vanishing to order 1 on  $\bar p_{i,s}$. Let $h$ be a lift of $h_s$ in $\Gamma(\cO_{U_{i}})$, and $H$ its zero locus.  Then $H \to \Delta$ is quasi-finite, and replacing $\Delta$ by the localization of $H$ at $\bar p_{i,s}$ we get a section of $U_i \to \Delta$ meeting $X_{i}$. Denoting its image in $\bar C$ by $\bar p_i$ and repeating this for all components we obtain the required set of sections $P$.
} 
\end{proof}

We now replace $\Delta$ by $\tDelta$, so we may assume sections as in the proposition exist over $\Delta$.

\begin{proposition}\label{uniqstbl}
\begin{enumerate}
\item The morphism $\eta\to \cW^N$ induced by $f_\eta$ and $P_\eta\subset C_\eta$ defines a stable configuration, and hence a morphism $\eta\to W^{[N]}_\pi$ which lifts the morphism $\Delta\to W^N$ given by $\bar f$ and $\bar P$;
\item There exists a unique lifting of $\Delta\to B$ to $a_0:\Delta\to T$ such that the induced map $s\to \cW^{[N]}$  is a stable configuration.
\end{enumerate}
\end{proposition}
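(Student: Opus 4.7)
The strategy is to handle parts (1) and (2) separately, relying on the properness of the configuration moduli established in Proposition~\ref{Prop:conf-deg} as the main tool for part (2), while part (1) is a direct verification of the two defining conditions of a stable expanded configuration.

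For part (1), the plan is first to check that each $\sigma_i := f_\eta(p_{i,\eta})$ lies in $\cW_\eta^{\smooth}\setminus D$: the lift $p_{i,\eta}$ is smooth and unmarked in $C_\eta$, because the stabilization morphism $C_\eta \to \bar C_\eta$ is an isomorphism on the smooth unmarked locus and $\bar p_{i,\eta}$ is smooth and unmarked in $\bar C_\eta$ by Proposition~\ref{Prop:sections}; predeformability of $f_\eta$ then prevents its image from landing on $\cW^{\sing}$ or on $D$ (those preimages being nodes and marked points respectively). To verify the stability condition---that each exceptional component $P_j$ of $\cW_\eta$ contains some $\sigma_i$---I would argue that stability of the stable map $(C_\eta,f_\eta)$ forces, for each $P_j$, the existence of a component $X_j$ of $C_\eta$ dominating $D$ via $P_j \to D$: otherwise the $\GG_m$-action on $P_j$ would lift to an automorphism of $f_\eta$, contradicting stability. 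Such $X_j$ has image $D \subset W$, so it is not contracted by $C_\eta \to \bar C_\eta$ and persists as a component $\bar X_j$ of $\bar C_\eta$. With a generic choice of sections in Proposition~\ref{Prop:sections}, one checks that they also meet every component of $\bar C_\eta$ (each component of $\bar C_s$ lies generically in a unique irreducible component of $\bar C$, whose generic fiber is the corresponding component of $\bar C_\eta$), so some $p_{i,\eta}$ lies on $X_j$, giving $\sigma_i \in P_j^o$. The resulting morphism $\eta \to W_\pi^{[N]}$ is compatible with $\Delta \to W^N$ by construction.

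For part (2), the plan is to apply the valuative criterion of properness to the projective morphism $W_\pi^{[N]} \to W^N_\pi$ from Proposition~\ref{Prop:conf-deg}. The $\eta$-point from part (1) and the morphism $\Delta \to W^N_\pi$ induced by $\bar f\circ\bar p_i$ form a commutative square; the unique filler $\Delta \to W_\pi^{[N]}$ provided by properness corresponds to a family of stable expanded configurations over $\Delta$, and the underlying family of expansions yields the desired $a_0 : \Delta \to T$ via the forgetful morphism $W_\pi^{[N]} \to T$. Stability of the configuration at $s$ is automatic since we have a family of stable configurations, and uniqueness of $a_0$ follows from uniqueness of the valuative-criterion filler.

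The main obstacle I anticipate is the stability check in part (1): establishing that stability of the stable map always supplies a horizontal component in each $P_j$, and promoting Proposition~\ref{Prop:sections}'s special-fiber condition on $\bar C_s$ to a condition on every generic component of $\bar C_\eta$. These points may require a mild strengthening of Proposition~\ref{Prop:sections} (enlarging the set of sections, or a separate case analysis when marked points, rather than horizontal components, provide the rigidity of $f_\eta$ at some $P_j$).
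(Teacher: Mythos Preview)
Your treatment of part (2) is correct and is exactly the paper's argument: properness (in fact projectivity, hence also representability) of $W_\pi^{[N]}\to W^N_\pi$ from Proposition~\ref{Prop:conf-deg} gives the unique lifting, and composing with $W_\pi^{[N]}\to T$ yields $a_0$.

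For part (1), your check that each $\sigma_i$ lands in $\cW_\eta^\smooth\setminus D$ is fine. The stability check, however, takes a wrong turn. You try to produce, for each exceptional $P_j$, a \emph{horizontal} component $X_j\subset C_\eta$ (one dominating $D$ under $P_j\to D$), and then argue that its image in $W$ is $D$ so it survives in $\bar C_\eta$. Both steps fail in general. First, being in $K$ does not force a horizontal component: Lemma~\ref{pointsincK} only guarantees a \emph{non-semistable} component meeting $P_j^o$, and such a component may well map into a single fiber of $P_j\to D$ (for instance a degree-one map to a fiber carrying an extra node or marked point). Second, even when a horizontal component exists, your deduction ``image $D$, hence not contracted'' collapses when $\dim D=0$ (the relative-curve case), since then the image is a point.

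The paper's route avoids this entirely. The key observation is that for a predeformable map, an irreducible component $X\subset C_\eta$ is semistable \emph{if and only if} it is contracted by the stabilization $C_\eta\to\bar C_\eta$: a semistable component has exactly two special points and maps to a point of $W$, hence is contracted; conversely, any contracted component maps to a fiber of some $P_j$ and, by predeformability, has at least one node over each of $D_j^\pm$, so having at most two special points forces it to be semistable. Now Lemma~\ref{pointsincK} gives, for each $P_j$, a non-semistable (hence non-contracted) component $X$ meeting $P_j^o$; such $X$ corresponds to a component of $\bar C_\eta$, and your own parenthetical specialization argument (each component of $\bar C_s$ lies in a unique irreducible component of $\bar C$) already shows that condition (4) of Proposition~\ref{Prop:sections} forces every component of $\bar C_\eta$ to contain some $\bar p_{i,\eta}$. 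Hence $p_{i,\eta}\in X$ and $\sigma_i\in P_j^o$. No strengthening of Proposition~\ref{Prop:sections} is needed; you should simply replace the horizontal-component detour by the equivalence ``semistable $\Leftrightarrow$ contracted'' and invoke Lemma~\ref{pointsincK}.
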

\begin{proof}
(1) If $X\subset C_\eta$ is an irreducible component then $X$ contracts to a point in $\bar C_\eta$ if and only if it contains no $p_i$, if and only if it is semistable. The result then follows from Lemma \ref{pointsincK}. 

(2) This follows from the fact that $W^{[N]}_\pi\to W^N_B$ is projective by Proposition \ref{Prop:conf-deg}, hence proper and representable. 
\end{proof}

\begin{proposition}\label{ndifstable}
The lifting  $(C^{a_0}_s,\Sigma^{a_0}_s,f^{a_0}_s)\in \fK$ corresponding to $a_0$ lies in $K_\pd$.
\end{proposition}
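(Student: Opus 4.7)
The plan is to verify separately that the special fiber $(C^{a_0}_s,\Sigma^{a_0}_s,f^{a_0}_s)$ lies in $K$ and in $\fK_\pd$, so that it lies in $K_\pd=K\cap\fK_\pd$.

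For the Deligne--Mumford condition, by Lemma \ref{pointsincK} it suffices, for every exceptional component $P_j$ of $\cW^{a_0}_s$, to exhibit a non-semistable component of $C^{a_0}_s$ whose image under $f^{a_0}_s$ meets the interior $P_j^o$. Stability of the configuration (Proposition \ref{uniqstbl}(2)) yields an index $i$ with $\sigma_{i,s}\in P_j$; since $\sigma_{i,s}$ lies in the smooth locus of $\cW^{a_0}_s$, in fact $\sigma_{i,s}\in P_j^o$. Next, $\bar p_i$ lifts---using Proposition \ref{Prop:sections} together with the fact that $\bar p_i$ avoids the image of the contracted locus of $C^{a_0}\to\bar C$ as guaranteed by Lemma \ref{Lem:chain}---to a section $p_i\colon\Delta\to C^{a_0}$. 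The two sections $\sigma_i$ and $f^{a_0}\circ p_i$ of $\cW^{a_0}\to\Delta$ both lift $\bar f\circ\bar p_i$ and agree on $\eta$ by the construction in Proposition \ref{uniqstbl}(1); by separatedness of $\cW^{a_0}\to\Delta$ they agree on $\Delta$, so $\sigma_{i,s}=f^{a_0}_s(p_{i,s})$. Since $p_{i,s}$ lies on a non-contracted component of $C^{a_0}_s$, and semistable components are among those contracted by the stabilization, this component is non-semistable, and its image contains $\sigma_{i,s}\in P_j^o$.

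For membership in $\fK_\pd$, I invoke that $\fK_\pd$ is closed in $\fK_\nd$ (Lemma \ref{predefisclosed}); combined with the generic fiber being in $\fK_\pd\subset\fK_\nd$, it suffices to show that the special fiber is nondegenerate. The strict-henselian computation in the proof of Lemma \ref{Lem:chain} actually establishes the stronger pointwise assertion that, in any family in $\fK(\Delta)$ with predeformable generic fiber, no smooth unmarked point of the special fiber can map to $\cW^\spe$. Applied to $C^{a_0}$, this confines $(f^{a_0}_s)^{-1}(\cW^\spe_s)$ to the finite set of marked and singular points of $C^{a_0}_s$, precluding any component of $C^{a_0}_s$ from being contained in $\cW^\spe_s$. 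The main obstacle is extracting this strong pointwise statement from Lemma \ref{Lem:chain}'s proof (which literally states only the chain-theoretic corollary) and the related bookkeeping around sections of the expansion; once that is done, both halves of the verification follow immediately.
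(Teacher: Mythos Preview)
Your proof is correct and follows the paper's two-part verification: membership in $K$ via the stable configuration and Lemma \ref{pointsincK}, and membership in $\fK_{\nd}$ so that Lemma \ref{predefisclosed} finishes. The only difference is in the nondegeneracy step: instead of the paper's case split on contracted versus non-contracted components of $C^{a_0}_s$, you extract from the proof of Lemma \ref{Lem:chain} the pointwise claim that no smooth unmarked point of the special fiber maps to $\cW^{\spe}$---a valid reinterpretation, since that local computation uses nothing about the ambient component beyond the existence of such a point---which makes the argument slightly shorter at the cost of citing the proof rather than the statement of the lemma.
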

\begin{proof}
Since $K_\pd$ is closed in $K_{\nd}$ it suffices to show that  $(C^{a_0}_s,\Sigma^{a_0}_s,f^{a_0}_s)\in K_{\nd}$, and since  $K_{\nd} =  K\cap \fK_{\nd}$ it suffices to show that is in both $K$ and $\fK_{\nd}$.

{\sc Step 1}: $(C^{a_0}_s,\Sigma^{a_0}_s,f^{a_0}_s)\in \fK_{\nd}.$
 We need to show that no component of $C^{a_0}$ maps to {$\cW^\spe$}. These components come in two types: those contracted in $\bar C_s$ and those that are not. 

{\sc Step 1a}:  components contracted in $C^{a_0}_s\to \bar C_s$. By Lemma \ref{Lem:chain}, 
any connected component of the locus in $C^{a_0}_s$ contracted in $\bar C_s$ is a chain of rational curves which maps to either a marked or singular point of $\bar C_s$. But in a chain of rational curves over a point in $\bar C$ every component maps to a fiber of $\cW \to W$, so a component mapping to $\cW^\spe$  maps to a point in $\cW$, contradicting stability. 

{\sc Step 1b}:   consider a component $X_i\subset \bar C^*_s$. Then $X_i$ is canonically a component of $(C^{a_0})^*_s$  using notation as in the proof of Proposition \ref{Prop:sections}. Since $\bar p_i$ meets $X_i$ and $\bar p_i$ does not land in $\cW^\spe$, it follows that neither does $X_i$.

{\sc Step 2}: It remains to prove that $(C^{a_0}_s,\Sigma^{a_0}_s,f^{a_0}_s)\in K$. Since the map  $s\to \cW^{[N]}$  induced by $a_0$ is stable, the configuration $(p_{i,s})$ in $\cW$ is stable. So every exceptional component of $\cW$ has at least one $f^{a_0}_s(p_i)$ in its interior some component $X_i \subset \bar C^*_s$. Since these are not semistable components, we have that $(C^{a_0}_s,\Sigma^{a_0}_s,f^{a_0}_s)\in K$, as required.



\end{proof}

\begin{proposition}\label{stblifnd}
Let $a:\Delta\to T$ be any lifting of $\Delta\to B$. If  $(C^{a}_s,\Sigma^{a}_s,f^{a}_s)\in K_\pd$ then $a$ is isomorphic to $a_0$.
\end{proposition}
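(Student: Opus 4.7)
The plan is to show that $a$ induces a morphism $\Delta \to W^{[N]}_\pi$ extending the morphism $\eta \to W^{[N]}_\pi$ used to construct $a_0$, and whose composition with $W^{[N]}_\pi \to T$ recovers $a$. Since $W^{[N]}_\pi \to W^N_\pi$ is proper by Proposition \ref{Prop:conf-deg}, any such lift is unique, and the lift associated with $a_0$ has the same underlying $\Delta \to W^N_\pi$ (coming from $\bar f$ and the sections $\bar P$ of Proposition \ref{Prop:sections}); the two lifts to $W^{[N]}_\pi$ therefore coincide, so $a \cong a_0$.

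To produce the lift from $a$, I would set $\sigma_i := f^a \circ p_i$ and verify that $(\cW^a, \sigma_i)$ is a family of stable expanded configurations. The generic fiber is given, so only the closed fiber needs attention. Each $p_{i,s}$ is smooth and unmarked in $C^a_s$ by Proposition \ref{Prop:sections}, and predeformability forces preimages of $\cW^{\sing}$ to be nodes and preimages of $D$ (in the pair case) to be marked; hence $\sigma_{i,s} \notin \cW^{\spe}$. For the stability condition, suppose an exceptional component $P_j$ of $\cW^a_s$ contains no $\sigma_{i,s}$. Since $(C^a_s,\Sigma^a_s,f^a_s) \in K$, Lemma \ref{pointsincK} supplies a non-semistable component $X \subset C^a_s$ with $f^a_s(X) \subset P_j$ meeting $P_j^o$. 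If $X$ is not contracted by the stabilization $C^a \to \bar C$, then $X$ is the canonical lift $\tilde X_i$ containing some $p_i$, and by the smoothness step $\sigma_{i,s} \in P_j^o$, contradicting our assumption. Otherwise, Lemma \ref{Lem:chain} places $X$ in a chain $Z$ of rational curves contracted to a node or marked point $\bar q \in \bar C_s$ with $\bar f(\bar q) \in D$, and the task is to derive a contradiction by showing that predeformability forces every component of $Z$ to be semistable.

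The hard part will be this last rigidity of $Z$. The image $f^a_s(Z)$ lies in the reducible fiber of $\cW^a_s \to W$ over $\bar f(\bar q)$, a chain of projective lines passing through each component of $\cW^a_s$. Rigidity rests on two local consequences of predeformability: first, the local model $x \mapsto u^r,\, y \mapsto v^r$ at a node of $C^a_s$ mapping to $\cW^{\sing}$ sends the two branches into the two distinct components of $\cW^a_s$ meeting there, so $Z$ cannot ``zig-zag'' back into a component it has already left; second, the ban on smooth unmarked points of $C^a_s$ landing in $\cW^{\spe}$ forbids ``accumulation'' of multiple chain components inside a single $P_k$, since such a configuration would force a chain component either to dominate a fiber of $P_k \to D$ and produce smooth unmarked preimages of $D_k^\pm$ (excluded), or to be contracted (destabilizing the map at a rational curve with too few special points). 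Combined with the observation that in the pair case a marked point of $Z$ must land at $D(\ell)$, these constraints pin $Z$ down as exactly $\ell$ semistable components, one per $P_k$, contradicting the non-semistability of $X$ and completing the argument.
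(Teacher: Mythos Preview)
Your approach is essentially the paper's: use properness of $W^{[N]}_\pi \to W^N_\pi$ to reduce to checking that $(\cW^a_s, f^a_s(P))$ is a stable expanded configuration, then verify smoothness of the $\sigma_{i,s}$ via predeformability and stability via Lemma~\ref{pointsincK}. The paper's own proof is terser exactly where you split into cases: it asserts directly that the non-semistable component furnished by Lemma~\ref{pointsincK} is one of the $X_i\subset \bar C^*_s$, leaving implicit the chain analysis you sketch in your last paragraph to rule out contracted components.
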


\begin{proof} 
By properness of $W^{[n]}_\pi$, it suffices to show that $f^{a}_s(P)$ is a stable configuration. This means that every exceptional component of $\cW$ contains a point of $f^a_s(P)$ in its interior, and that $f_s^a(P)$ is disjoint from $\cW^\spe$.

Since the inverse image of $\cW^\spe$ is contained in its nodes and marked points, and since $P$ is chosen disjoint from those, we have that $f_s^a(P)$ is disjoint from $\cW^\spe$. On the other hand, by Lemma \ref{pointsincK}, every exceptional component of $\cW$ contains in its interior the image of some component $X_i\subset \bar C^*_s$. Since $X_i$ contains a point of $P$, every exceptional component of $\cW$ contains in its interior the image of a point of $P$, as required.

\end{proof}

To conclude the proof of  Theorem \ref{propermappd} 
we use the valuative criterion of properness. 
It is enough to show that there exists a unique lifting $a$  of $\Delta$ to $T$ such that the induced point $(C^{a},\Sigma^{a}, f^{a})$ is contained  in $K_\pd$.
 Hence it is enough to prove that a lifting $a:\Delta \to T$ induces a map to $K_\pd$ if and only if it is isomorphic to $a_0$, where $a_0$ is defined in Part (2) of Proposition \ref{uniqstbl}.  This is given by Propositions \ref{stblifnd} and  \ref{ndifstable}.
\end{proof}

\begin{corollary}\label{C:pd-proper} Under the assumptions for this section, assume moreover that $W\to B$ is proper and has projective coarse moduli space. Then $K_{pd}$ is proper over $B$.
\end{corollary}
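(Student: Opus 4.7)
The plan is to deduce the corollary by factoring $K_{pd}\to B$ through $K(W)$ and showing both factors are proper. Theorem \ref{propermappd} already gives that the first factor $K_{pd}\to K(W)$ is proper, so the entire task reduces to proving that the relevant components of $K(W)\to B$ are proper.

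First, I would note that ``$K_{pd}$ is proper over $B$'' must be understood componentwise: $K(W)$ decomposes into open and closed substacks indexed by the discrete invariants of stable maps (genus, number of marked points, curve class, and in the twisted case the gerbe structure at marked points and nodes), and the statement concerns each such component, which is of finite type. Since the image in $K(W)$ of a component of $K_{pd}$ is contained in finitely many such components, it suffices to show that each component of $K(W)\to B$ with fixed discrete invariants is proper. For this I would appeal to the properness theorem for stable maps (Kontsevich in the scheme case, and \cite{AV} in the twisted Deligne--Mumford case) applied to the projective family $W\to B$: the hypothesis that $W\to B$ is proper with projective coarse moduli space is exactly what is required to invoke the standard properness result for the moduli of (twisted) stable maps to fibers.

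Combining the two, the composition
\[
K_{pd}\ \longrightarrow\ K(W)\ \longrightarrow\ B
\]
is a composition of proper morphisms, and hence proper. In particular $K_{pd}$ is proper over $B$, which is the desired statement.

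The main obstacle is really only a matter of bookkeeping: making sure the properness statement for $K(W)\to B$ is applied to each connected component separately, and that in the twisted case one quotes the correct reference from \cite{AV}. There is no genuinely new technical difficulty, since the heart of the argument, namely the properness of $K_{pd}\to K(W)$, has already been established in Theorem \ref{propermappd}.
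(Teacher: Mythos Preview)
Your proposal is correct and follows exactly the paper's approach: factor $K_{pd}\to B$ as $K_{pd}\to K(W)\to B$, use Theorem~\ref{propermappd} for the first map, and invoke \cite[Theorem 1.4.1]{AV} for the properness of $K(W)\to B$. The paper's proof is a one-liner citing that reference; your additional discussion of connected components and discrete invariants is reasonable bookkeeping but not needed for the argument as stated.
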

\begin{proof}
This follows since by \cite[Theorem 1.4.1]{AV} the stack $K(W)$ is proper.\end{proof}



\providecommand{\bysame}{\leavevmode\hbox to3em{\hrulefill}\thinspace}
\providecommand{\MR}{\relax\ifhmode\unskip\space\fi MR }
\providecommand{\MRhref}[2]{%
  \href{http://www.ams.org/mathscinet-getitem?mr=#1}{#2}
}
\providecommand{\href}[2]{#2}

\end{document}